\newcommand{\tf}{time-frequency}
\newcommand{\tfs}{time-frequency shift}
\newtheorem{theorem}{Theorem}[section]
\newtheorem{lemma}[theorem]{Lemma}
\newtheorem{proposition}[theorem]{Proposition}
\newtheorem{definition}[theorem]{Definition}
\newtheorem{cor}[theorem]{Corollary}
\newtheorem{example}[theorem]{Example}
\newtheorem{remark}[theorem]{Remark}
\newcommand{\beqa}{\begin{eqnarray*}}
	\newcommand{\eeqa}{\end{eqnarray*}}
\newcommand{\field}[1]{\mathbb{#1}}
\newcommand{\bR}{\field{R}}        
\newcommand{\bC}{\field{C}}        
\def\la{\lambda}
\def\eps{\epsilon}
\def\cF{\mathcal{F}}              
\def\cS{\mathcal{S}}
\def\cD{\mathcal{D}}
\def\cH{\mathcal{H}}
\def\cA{\mathcal{A}}
\def\cC{\mathcal{C}}
\def\rd{\bR^d}
\def\rdd{{\bR^{2d}}}
\def\lrd{L^2(\rd)}
\def\<{\left<}
\def\>{\right>}
\def\mv1{M_v^1}
\def\mn{(m,n)}
\def\mn'{(m',n')}
\newcommand{\norm}[1]{\lVert#1\rVert}
\def\i{\infty}
\def\Ren{\mathbb{R}^d}
\def\sch{\mathcal{S}}
\def\f{\varphi}
\def\Sn2{S_{2}(L^{2}(\Ren))}
\def\S1{S_{1}(L^{2}(\Ren))}
\def\sig00{\sigma_{0,0}}
\def\la{\langle}
\def\ra{\rangle}
\newcommand{\A}{\mathcal{A}}
\begin{document}
	
\begin{abstract}
	Time-frequency localization operators, originally introduced by Daubechies (1988), provide a framework for localizing signals in the phase space and have become a central tool in time-frequency analysis. 
	In this paper we introduce and study a broad generalization of these operators, called \emph{$\mathcal{A}$-localization operators}, associated with a metaplectic Wigner distribution $W_\mathcal{A}$ and the corresponding $\mathcal{A}$-pseudodifferential calculus.
	
	We first show that the classical relation between localization operators and Weyl quantization extends to any \emph{covariant metaplectic Wigner distribution}. 
	Specifically, if $W_\mathcal{A}$ satisfies the covariance property
	\[
	W_\mathcal{A}(\pi(z)f,\pi(z)g)=T_zW_\mathcal{A}(f,g), \qquad z\in\mathbb{R}^{2d},
	\]
	then
	\[
	A_{a}^{\varphi_1,\varphi_2}
	= \operatorname{Op}_\mathcal{A}\big(a * W_\mathcal{A}(\varphi_2,\varphi_1)\big),
	\]
	and conversely, this identity characterizes covariance. 
	This result extends the recent representation formula of Bastianoni and Teofanov for $\tau$-operators to the full metaplectic framework.
	
	We then define the $\mathcal{A}$-localization operator $A_{a,\mathcal{A}}^{\varphi_1,\varphi_2}$ and investigate its analytical properties. 
	We establish boundedness results on modulation spaces and provide sufficient conditions for Schatten-von Neumann class membership.
	These findings connect the structure of metaplectic representations with time-frequency localization theory, offering a unified approach to quantization and signal analysis.
\end{abstract}

\title{$\cA$-localization operators}
\author{Elena Cordero}
\address{Universit\`a di Torino, Dipartimento di Matematica, via Carlo Alberto 10, 10123 Torino, Italy}
\email{elena.cordero@unito.it}
\author{Edoardo Pucci}
\address{Universit\`a di Torino, Dipartimento di Matematica, via Carlo Alberto 10, 10123 Torino, Italy}
\email{edoardo.pucci@unito.it}
\thanks{}
\subjclass[2010]{42A38,46F12,81S30}

\keywords{Localization operators, Time-frequency analysis, Short-time Fourier transform, Wigner distribution, modulation spaces}
\maketitle

\section{Introduction}

Time-frequency localization operators were introduced by Daubechies in 1988~\cite{Daubechies} as a class of operators designed to localize a signal in the phase (or time-frequency) space. 
Since then, they have become a fundamental tool in signal analysis; see, for instance, the textbooks~\cite{FeichtingerNowak,Wong1,Wong2} and the survey~\cite{Englis}.

The classical definition of localization operators~\cite{corderogrochenig} relies on the \textit{short-time Fourier transform} (STFT). 
Given a signal \( f \in \mathcal{S}'(\mathbb{R}^d) \), the STFT with respect to a nonzero window function \( g \in \mathcal{S}(\mathbb{R}^d) \) is defined by
\begin{equation}\label{eq:stft}
	V_g f(x,\xi) = \int_{\mathbb{R}^d} f(t)\,\overline{g(t-x)}\,e^{-2\pi i t\xi}\,dt
	= \langle f, M_\xi T_x g\rangle, \quad (x,\xi)\in\mathbb{R}^{2d},
\end{equation}
where \(M_{\xi}\) and \(T_x\) denote the modulation and translation operators:
\[
M_{\xi}f(t)= e^{2\pi i \xi t} f(t),\qquad T_x f(t)=f(t-x),
\]
and their composition \(\pi(x,\xi)=M_\xi T_x\) is called the \textit{time--frequency shift}.

For two windows \( \varphi_1, \varphi_2 \in \mathcal{S}(\mathbb{R}^d)\setminus\{0\} \) and a symbol \( a \in \mathcal{S}'(\mathbb{R}^{2d}) \), 
the \textit{localization operator} is defined as
\begin{equation}\label{eq:loc}
	A_{a}^{\varphi_1,\varphi_2} f(t)
	= \int_{\mathbb{R}^{2d}} a(x,\omega)\,V_{\varphi_1} f(x,\omega)\,M_\omega T_x \varphi_2(t)\,dx\,d\omega, 
	\qquad t\in\mathbb{R}^d,
\end{equation}
and it acts continuously from \(\mathcal{S}(\mathbb{R}^d)\) to \(\mathcal{S}'(\mathbb{R}^d)\).

The \textit{Weyl quantization} of a symbol \( \sigma \in \mathcal{S}'(\mathbb{R}^{2d}) \) is defined by
\begin{equation}\label{eq:Weyl}
	\mathrm{Op}_{w}(\sigma)f(x)=
	\int_{\mathbb{R}^{2d}} e^{2\pi i (x-y)\xi}\,
	\sigma\!\left(\frac{x+y}{2},\xi\right) f(y)\,dy\,d\xi.
\end{equation}
Its associated time--frequency representation is the (cross-)Wigner distribution,
\begin{equation}\label{CWD}
	W(f,g)(x,\xi)=
	\int_{\mathbb{R}^d} f\!\left(x+\frac{t}{2}\right)
	\,\overline{g\!\left(x-\frac{t}{2}\right)}\,e^{-2\pi i t\xi}\,dt,
\end{equation}
so that
\begin{equation}\label{I7}
	\langle \operatorname{Op}_w(\sigma)f,g\rangle=\langle \sigma, W(g,f)\rangle.
\end{equation}
Every localization operator admits a Weyl representation:
\begin{equation}\label{eq:loc-Weyl}
	A^{\varphi_1,\varphi_2}_a
	= \operatorname{Op}_w(a * W(\varphi_2, \varphi_1)),
\end{equation}
see~\cite{folland89} for the Gaussian case and~\cite{BCG} for the general one.

More recently, Bastianoni and Teofanov~\cite{BastianoniTeofanov} extended this formula to the broader class of $\tau$-operators.  
For $\tau\in\mathbb{R}$, the (cross-)$\tau$-Wigner distribution is defined by
\begin{equation}\label{tau-Wigner distribution}
	W_{\tau}(f,g)(x,\xi)
	= \int_{\mathbb{R}^d} e^{-2\pi i t\xi}\,f(x+\tau t)\,
	\overline{g(x-(1-\tau)t)}\,dt,
\end{equation}
and the associated $\tau$-quantization satisfies
\begin{equation}\label{C21}
	\langle \operatorname{Op}_\tau(a)f,g\rangle
	= \langle a, W_\tau(g,f)\rangle.
\end{equation}
They proved that
\begin{equation}\label{eq:tau}
	A_{a}^{\varphi_1,\varphi_2}
	=  \operatorname{Op}_\tau(a * W_\tau(\varphi_2, \varphi_1)),\qquad \tau\in[0,1],
\end{equation}
and the same argument extends to all $\tau\in\mathbb{R}$.  
A natural question arises: \textit{does this equality extend to the full class of metaplectic pseudodifferential operators} introduced in~\cite{CR2021}?

To address this question, we recall the definition of the $\mathcal{A}$-Wigner distributions, which include the $\tau$-Wigner family as a special case.  
Let $\widehat{\cA}\in\mathrm{Mp}(2d,\mathbb{R})$ be a metaplectic operator with projection $\mathcal{A}\in\mathrm{Sp}(2d,\mathbb{R})$.  
The \textit{$\mathcal{A}$-Wigner distribution} (or \textit{metaplectic Wigner distribution}) is defined by
\[
W_\mathcal{A}(f,g):=\widehat{\cA}(f\otimes\overline{g}),\qquad f,g\in L^2(\mathbb{R}^d),
\]
which generalizes several classical time--frequency representations such as the STFT and the $\tau$-Wigner distributions, see the next section for details.

A key feature of $\tau$-Wigner distributions is their \textit{covariance property}:
\[
W_\tau(\pi(z)f,\pi(z)g)=T_zW_\tau(f,g), \qquad z\in\mathbb{R}^{2d}.
\]
This property extends to a subclass of metaplectic Wigner distributions, called \textit{covariant metaplectic Wigner distributions}, satisfying
\begin{equation}\label{eq:covarianceid}
	W_{\mathcal{A}}(\pi(z)f,\pi(z)g)=T_zW_{\mathcal{A}}(f,g).
\end{equation}

Given such a covariant $W_\mathcal{A}$, the corresponding \textit{metaplectic pseudodifferential operator} is
\begin{equation}\label{capseudo}
	\langle \operatorname{Op}_\mathcal{A}(\sigma)f,g\rangle
	= \langle \sigma, W_\mathcal{A}(g,f)\rangle,
\end{equation}
for $\sigma\in\mathcal{S}'(\mathbb{R}^{2d})$.

We can now generalize the result of~\cite{BastianoniTeofanov} as follows.

\begin{theorem}\label{main}
	Let \( \varphi_1,\varphi_2\in\mathcal{S}(\mathbb{R}^d)\setminus\{0\}\), \(a\in\mathcal{S}'(\mathbb{R}^{2d})\), and \(\mathcal{A}\in \mathrm{Sp}(2d,\mathbb{R})\).  
	If \(W_\mathcal{A}\) is covariant, then
	\begin{equation}\label{eq:a-loc}
		A_{a}^{\varphi_1,\varphi_2}
		= \operatorname{Op}_\mathcal{A}(a*W_\mathcal{A}(\varphi_2,\varphi_1)).
	\end{equation}
	Conversely, if this identity holds for all \(a\in\cS'(\rdd),\varphi_1,\varphi_2\in\cS(\rd)\), then \(W_\mathcal{A}\) is covariant.
\end{theorem}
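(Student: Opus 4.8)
The plan is to reduce both implications to a single pointwise ``magic identity'' relating a product of short-time Fourier transforms to a convolution of $\mathcal{A}$-Wigner distributions, and to exploit two properties of $W_\mathcal{A}$ that hold \emph{irrespective} of covariance. First, since $\widehat{\cA}$ is unitary on $\lrdd$ and $\langle f\otimes\overline g,\varphi\otimes\overline\psi\rangle=\langle f,\varphi\rangle\,\overline{\langle g,\psi\rangle}$, one obtains a Moyal formula $\langle W_\mathcal{A}(f,g),W_\mathcal{A}(\varphi,\psi)\rangle=\langle f,\varphi\rangle\,\overline{\langle g,\psi\rangle}$ for all $f,g,\varphi,\psi\in\lrd$. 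Second, $\widehat{\cA}$ restricts to a topological isomorphism of $\cS(\rdd)$, so $W_\mathcal{A}(f,g)\in\cS(\rdd)$ when $f,g\in\cS(\rd)$, and — since the tensor products $g\otimes\overline f$ span a dense subspace of $\cS(\rdd)$ and of $\lrdd$ — the family $\{W_\mathcal{A}(g,f):f,g\in\cS(\rd)\}$ has dense span in $\cS(\rdd)$ and in $\lrdd$. These two ingredients play the roles that Moyal's identity and the density of products of Wigner distributions play in the Weyl case.

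For the forward implication I would fix $f,g\in\cS(\rd)$ and compute both bilinear forms. From \eqref{eq:loc}, $\langle A_a^{\varphi_1,\varphi_2}f,g\rangle=\int_{\rdd}a(z)\,V_{\varphi_1}f(z)\,\overline{V_{\varphi_2}g(z)}\,dz$; from \eqref{capseudo} and the convolution adjoint $\int(a*b)\,\overline c=\int a\,(\widetilde b*\overline c)$, with $\widetilde b(z)=b(-z)$, one has $\langle\operatorname{Op}_\mathcal{A}(a*W_\mathcal{A}(\varphi_2,\varphi_1))f,g\rangle=\int_{\rdd}a(z)\,\big(\widetilde{W_\mathcal{A}(\varphi_2,\varphi_1)}*\overline{W_\mathcal{A}(g,f)}\big)(z)\,dz$. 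Since $a\in\cS'(\rdd)$ is arbitrary and both symbols lie in $\cS(\rdd)$, \eqref{eq:a-loc} reduces to the pointwise identity
\[
V_{\varphi_1}f(z)\,\overline{V_{\varphi_2}g(z)}=\big(\widetilde{W_\mathcal{A}(\varphi_2,\varphi_1)}*\overline{W_\mathcal{A}(g,f)}\big)(z),\qquad z\in\rdd .
\]
Rewriting the right-hand side as $\int_{\rdd}\big(T_zW_\mathcal{A}(\varphi_2,\varphi_1)\big)(w)\,\overline{W_\mathcal{A}(g,f)(w)}\,dw$, I would use the covariance hypothesis \eqref{eq:covarianceid} to replace $T_zW_\mathcal{A}(\varphi_2,\varphi_1)$ by $W_\mathcal{A}(\pi(z)\varphi_2,\pi(z)\varphi_1)$, and then Moyal's formula to collapse the integral to $\langle\pi(z)\varphi_2,g\rangle\,\overline{\langle\pi(z)\varphi_1,f\rangle}=\overline{V_{\varphi_2}g(z)}\,V_{\varphi_1}f(z)$, which is exactly the left-hand side. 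The continuity of $A_a^{\varphi_1,\varphi_2}$ and of $\operatorname{Op}_\mathcal{A}(a*W_\mathcal{A}(\varphi_2,\varphi_1))$ from $\cS(\rd)$ to $\cS'(\rd)$ — which legitimizes the weak pairings — is already available for these classes.

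For the converse I would run the same computation, noting that the passage from \eqref{eq:a-loc} to $\int_{\rdd}\big(T_zW_\mathcal{A}(\varphi_2,\varphi_1)\big)(w)\,\overline{W_\mathcal{A}(g,f)(w)}\,dw=V_{\varphi_1}f(z)\,\overline{V_{\varphi_2}g(z)}$ uses \emph{only} \eqref{capseudo} and the convolution adjoint, not covariance. On the other hand Moyal's formula — always valid — gives $\int_{\rdd}W_\mathcal{A}(\pi(z)\varphi_2,\pi(z)\varphi_1)(w)\,\overline{W_\mathcal{A}(g,f)(w)}\,dw=\langle\pi(z)\varphi_2,g\rangle\,\overline{\langle\pi(z)\varphi_1,f\rangle}=V_{\varphi_1}f(z)\,\overline{V_{\varphi_2}g(z)}$ as well. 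Subtracting yields $\big\langle T_zW_\mathcal{A}(\varphi_2,\varphi_1)-W_\mathcal{A}(\pi(z)\varphi_2,\pi(z)\varphi_1),\,W_\mathcal{A}(g,f)\big\rangle=0$ for all $f,g\in\cS(\rd)$; since the left entry lies in $\lrdd$ and $\{W_\mathcal{A}(g,f):f,g\in\cS(\rd)\}$ is dense in $\lrdd$, it must vanish, i.e. $T_zW_\mathcal{A}(\varphi_2,\varphi_1)=W_\mathcal{A}(\pi(z)\varphi_2,\pi(z)\varphi_1)$ for all $z$ and all $\varphi_1,\varphi_2\in\cS(\rd)$. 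Renaming the windows and extending from $\cS(\rd)$ to $\lrd$ by continuity of $W_\mathcal{A}$, $\pi(z)$ and $T_z$ gives \eqref{eq:covarianceid}, the covariance of $W_\mathcal{A}$.

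The conceptual core is light; the effort lies in bookkeeping. The main obstacle I anticipate is keeping the conjugations and the sesquilinear/bilinear pairing conventions straight so that the magic identity emerges with the correct sides, and carefully justifying the adjoint formula $\int(a*b)\,\overline c=\int a\,(\widetilde b*\overline c)$ when $a$ is a tempered distribution convolved with Schwartz functions. Alongside this, one must record at the outset that $\widehat{\cA}$ preserves $\cS(\rdd)$ — so that $W_\mathcal{A}$ of Schwartz windows is Schwartz and all weak pairings against arbitrary $a\in\cS'(\rdd)$ are meaningful — and that the associated family of $\mathcal{A}$-Wigner distributions is dense in $\lrdd$; these are standard facts about the metaplectic representation, and they are precisely what makes the density step of the converse legitimate.
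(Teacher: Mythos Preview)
Your argument is correct. The forward direction is essentially the paper's proof: both reduce to the identity $\langle T_zW_\mathcal{A}(\varphi_2,\varphi_1),W_\mathcal{A}(g,f)\rangle=V_{\varphi_1}f(z)\,\overline{V_{\varphi_2}g(z)}$ via covariance plus Moyal, though the paper packages this as a separate lemma ($W_\mathcal{A}(f_1,g_1)*W_\mathcal{A}(f_2,g_2)^*=W(f_1,g_1)*W(f_2,g_2)^*$) and then invokes the known Weyl formula \eqref{eq:loc-Weyl}, while you go directly from the STFT definition of $A_a^{\varphi_1,\varphi_2}$ to the $\cA$-side without passing through the Weyl quantization at all. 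Your route is marginally more self-contained.

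The converse is where the two proofs genuinely diverge. The paper uses the change-of-quantization lemma (Lemma~\ref{lemmaComm}) to rewrite \eqref{eq:a-loc} as $a*W_\mathcal{A}(\varphi_2,\varphi_1)=\hat\cA\hat\cA_{1/2}^{-1}\big(a*W(\varphi_2,\varphi_1)\big)$, then specializes to $a=\delta_z$ and reads off $T_zW_\mathcal{A}(f,g)=\hat\cA\hat\cA_{1/2}^{-1}T_zW(f,g)=W_\mathcal{A}(\pi(z)f,\pi(z)g)$ using the covariance of the classical Wigner distribution. You instead keep $a$ generic, extract the pointwise identity, compare it with what Moyal always gives, and conclude by density of $\{W_\mathcal{A}(g,f):f,g\in\cS(\rd)\}$ in $\lrdd$. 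The paper's approach is shorter and avoids the density appeal, but it leans on Lemma~\ref{lemmaComm} and on the covariance of $W=W_{\cA_{1/2}}$ as external inputs; yours uses only the unitarity of $\hat\cA$ and is in that sense more intrinsic to the metaplectic framework.
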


This correspondence motivates the study of the more general class of \textit{$\mathcal{A}$-localization operators}, defined by
\begin{equation}\label{AlocopElena}
	A_{a,\mathcal{A}}^{\varphi_1,\varphi_2}
	:= \operatorname{Op}_\mathcal{A}(a * W_\mathcal{A}(\varphi_2,\varphi_1)).
\end{equation}
These operators act continuously from $\mathcal{S}(\mathbb{R}^d)$ to $\mathcal{S}'(\mathbb{R}^d)$. 
In this paper, we analyze the mapping $(a,\varphi_1,\varphi_2)\mapsto A_{a,\mathcal{A}}^{\varphi_1,\varphi_2}$,
derive boundedness results on modulation spaces, and provide Schatten-class criteria,
extending the classical framework of Cordero and Gröchenig~\cite{CorderoGrochenig2005}.

\vspace{0.1truecm}
\noindent\textit{Outline of the paper.}
Section~\ref{sec:preliminaries} recalls the main tools from time-frequency analysis, including modulation spaces, the symplectic and metaplectic groups, and the class of metaplectic Wigner distributions. 
In Section~\ref{sec:Aloc} we prove our main structural result, Theorem~\ref{main}, which characterizes the covariance property of metaplectic Wigner distributions in terms of the representation formula~\eqref{eq:a-loc}. 
We then provide explicit expressions for the Schwartz kernel of $\mathcal{A}$-localization operators, including the case of totally Wigner-decomposable symplectic matrices. 
Finally, we deal with the continuity and Schatten-class results on modulation spaces, obtained via the Weyl correspondence and convolution estimates for modulation spaces.

\section{Preliminaries}\label{sec:preliminaries}
\textbf{Notation.} We denote by 
$xy=x\cdot y$ the scalar product on $\Ren$.  The space   $\sch(\Ren)$ is the Schwartz class whereas its dual $\sch'(\Ren)$ is the space of temperate distributions. The brackets  $\la f,g\ra$ are the extension to $\sch' (\Ren)\times\sch (\Ren)$ of the inner product $\la f,g\ra=\int f(t){\overline {g(t)}}dt$ on $L^2(\Ren)$ (conjugate-linear in the second component). A point in the phase space (or \tf\ space) is written as
$z=(x,\xi)\in\rdd$, and  the corresponding phase-space shift (\tfs )
acts as
$
\pi (z)f(t) = e^{2\pi i \xi t} f(t-x),\ t\in\rd
$, that is the composition of the translation and modulation operators \begin{equation*}
    T_xf(t):=f(t-x),\quad M_{\xi}f(t):=e^{2\pi i t\xi} f(t),\quad t,x,\xi\in\rd.
\end{equation*}

The notation $f\lesssim g$ means that there exists $C>0$ such that $ f(x)\leq Cg(x)$  for every $x$. The symbol $\lesssim_t$ is used to stress that $C=C(t)$. If $ g\lesssim f\lesssim g$ (equivalently, $ f \lesssim g\lesssim f$), we write $f\asymp g$. Given two measurable functions $f,g:\rd\to\bC$, we set $f\otimes g(x,y):=f(x)g(y)$. If $X(\rd)$ is any among $L^2(\rd),\cS(\rd),\cS'(\rd)$, $X\otimes X$ is the unique completion of $\text{span}\{x\otimes y : x\in X(\rd)\}$ with respect to the (usual) topology of $X(\rdd)$. Thus, the operator $f\otimes g\in\cS'(\rdd)$  characterized by its action on $\varphi\otimes\psi\in\cS(\rdd)$
\[
	\la f\otimes g,\varphi\otimes\psi\ra = \la f,\varphi\ra\la g,\psi\ra,\quad  \forall f,g\in\cS'(\rd),
\]
extends uniquely to a tempered distribution of $\cS'(\rdd)$. The subspace $\text{span}\{f\otimes g: f,g\in\cS'(\rd)\}$ is dense in $\cS'(\rdd)$.

$GL(d,\bR)$ stands for the group of $d\times d$ invertible matrices, whereas $Sym(d,\bR)=\{C\in\bR^{d\times d} \ : \ C \ is \ symmetric\}$.

\subsection{Schatten-von Neumann Classes}
Let $\cH$ be a separable Hilbert space and $T:\cH\to \cH$ a compact operator. Then, $T^*T:\cH\to \cH$ is a compact, self-adjoint, non-negative operator. Hence, we can define its absolute value $|T|:=(T^*T)^{1/2}$ which is still compact, self-adjoint and non-negative on $\cH$. Therefore, by the spectral theorem we can find an orthonormal basis $(e_n)_n$ for $\cH$ consisting of eigenvectors of $|T|$. The corresponding eigenvalues $s_1(T)\geq s_2(T)\geq\dots\geq s_n(T)\geq \dots \geq 0$, are called the singular values of $T$. If $0<p<\i$ and the sequence of singular values is in $\ell^p$, then $T$ is said to belong to the Schatten-von Neumann class $\cS_p(\cH)$. If $1\leq p<\i$, a norm is associated to $\cS_p(\cH)$ by

\begin{equation}\label{schattennorm}
\norm{T}_{\cS_p}:=\bigg(\displaystyle\sum_{n=1}^\i s_n(T)^p\bigg)^{\frac{1}{p}}.
\end{equation}

If $1\leq p<\i$, then $(\cS_p(\cH),\norm{\cdot}_{\cS_p})$ is a Banach space whereas, for $0<p<1$, it is a quasi-Banach space since the quantity $\norm{T}_{\cS_p}$ defined in \eqref{schattennorm} is only a quasinorm. In this work we will only work with the Schatten classes $\cS_p(L^2(\rdd))$ which will be simply denoted by $\cS_p$.


If  $0<p,q\leq\infty$ and $f:\rdd\to\bC$ measurable, we set 
\[
	\norm{f}_{L^{p,q}}:=\left(\int_{\rd}\left(\int_{\rd}|f(x,y)|^p dx\right)^{\frac{q}{p}}dy\right)^{\frac{1}{q}},
\]
with the obvious adjustments when $\max\{p,q\}=\infty$. The space of measurable functions $f$ having $\norm{f}_{L^{p,q}}<\infty$ is denoted by $L^{p,q}(\rdd)$. 
%

\subsection{Time-frequency analysis tools}\label{subsec:23}
In this work, the Fourier transform of $f\in \cS(\rd)$ is normalized as
\[
\cF f=\hat f(\xi)=\int_{\rd} f(x)e^{-2\pi i\xi x}dx, \qquad \xi\in\rd.
\]
If $f\in\cS'(\rd)$, the Fourier transform of $f$ is defined by duality as the tempered distribution characterized by
\[
\langle \hat f,\hat\varphi\rangle=\langle f,\varphi\rangle, \qquad \varphi\in\cS(\rd).
\]
The operator $\cF$  is a surjective automorphism of $\cS(\rd)$ and $\cS'(\rd)$, as well as a surjective isometry of $L^2(\rd)$.
If $f\in\cS'(\rdd)$, we set $\cF_2f$, the partial Fourier transform with respect to the second variables: $$\cF_2(f\otimes g)=f\otimes\hat g,\quad f,g\in\cS'(\rd).$$
The \textit{short-time Fourier transform} of $f\in L^2(\rd)$ with respect to the window $g\in L^2(\rd)$ is defined in \eqref{eq:stft}.

In information processing $\tau$-Wigner distributions  ($\tau\in\bR$) play a crucial role \cite{ZJQ21}. They are defined in \eqref{tau-Wigner distribution}.
For $\tau=1/2$ we have the \textit{Wigner distribution}, defined in \eqref{CWD}.

\subsection{Modulation  spaces \cite{KB2020,F1,Feichtinger_1981_Banach,book,Galperin2004,Kobayashi2006,PILIPOVIC2004194}} \label{subsec:MSs}
For $0<p,q\leq\infty$,   $g\in\cS(\rd)\setminus\{0\}$, the \textit{modulation space} $M^{p,q}(\rd)$ is defined as the space of tempered distributions $f\in\cS'(\rd)$ such that $$\norm{f}_{M^{p,q}}:=\Vert V_gf\Vert_{L^{p,q}}<\infty.$$ If $\min\{p,q\}\geq1$, the quantity $\norm{\cdot}_{M^{p,q}}$ is a norm, otherwise a quasi-norm. Different windows give  equivalent (quasi-)norms. Modulation spaces are (quasi-)Banach spaces, enjoying the inclusion properties:
if $0<p_1\leq p_2\leq\infty$ and $0<q_1\leq q_2\leq\infty$  $$ \cS(\rd)\hookrightarrow M^{p_1,q_1}(\rd)\hookrightarrow M^{p_2,q_2}(\rd)\hookrightarrow\cS'(\rd).$$ In particular, $M^1(\rd)\hookrightarrow M^{p,q}(\rd)$ and $\min\{p,q\}\geq1$. 
If $1\leq p,q<\infty$, $(M^{p,q}(\rd))'=M^{p',q'}(\rd)$, where $p'$ and $q'$ denote the Lebesgue dual exponents of $p$ and $q$, respectively.

\subsection{The symplectic group $Sp(d,\mathbb{R})$ and the metaplectic operators}\label{subsec:26}
	A matrix $A\in\bR^{2d\times 2d}$ is symplectic, write $A\in Sp(d,\bR)$, if 
	\begin{equation}\label{fundIdSymp}
	A^TJA=J,\end{equation} where $J$ is the standard symplectic matrix:
    \begin{equation}\label{defJ}
        J=\begin{pmatrix}
	        0_{d\times d}&I_{d\times d}\\-I_{d\times d}&0_{d\times d}
	    \end{pmatrix}.
	\end{equation}
	
  \begin{remark}
      It is easy to check that $Sp(d,\bR)$ is a subgroup of $SL(2d,\bR)$ (see e.g. \cite{degosson}), in particular, if we write $\cA\in Sp(d,\bR)$ with block decomposition:\begin{equation*}
          \cA=\begin{pmatrix}
              A&B\\C&D,
          \end{pmatrix},\quad A,B,C,D\in \bR^{d\times d},
      \end{equation*}
      then the inverse of $\cA$ is given by:\begin{equation}\label{inverseA}
          \cA^{-1}=\begin{pmatrix}
              D^T& -B^T\\
              -C^T & A^T
          \end{pmatrix}.
      \end{equation}
  \end{remark}
  
	For $E\in GL(d,\bR)$ and $C\in Sym(2d,\bR)$, define:
	\begin{equation}\label{defDLVC}
		\cD_E:=\begin{pmatrix}
			E^{-1} & 0_{d\times d}\\
			0_{d\times d} & E^T
		\end{pmatrix} \qquad \text{and} \qquad V_C:=\begin{pmatrix}
			I_{d\times d} & 0\\ C & I_{d\times d}
		\end{pmatrix}.
	\end{equation}
	The matrices $J$, $V_C$ ($C$ symmetric), and $\cD_E$ ($E$ invertible) generate the group $Sp(d,\bR)$.\\

Recall  the Schr\"odinger representation $\rho$  of the Heisenberg group: $$\rho(x,\xi;\tau)=e^{2\pi i\tau}e^{-\pi i\xi x}\pi(x,\xi),$$ for all $x,\xi\in\rd$, $\tau\in\bR$. We will use the  property:  for all $f,g\in L^2(\rd)$, $z=(z_1,z_2),w=(w_1,w_2)\in\rdd$,
\[
	\rho(z;\tau)f\otimes\rho(w;\tau)g=e^{2\pi i\tau}\rho(z_1,w_1,z_2,w_2;\tau)(f\otimes g).
\]
For every $A\in Sp(d,\bR)$, $\rho_A(x,\xi;\tau):=\rho(A (x,\xi);\tau)$ defines another representation of the Heisenberg group that is equivalent to $\rho$, i.e., there exists a unitary operator $\hat A:L^2(\rd)\to L^2(\rd)$ such that:
\begin{equation}\label{muAdef}
	\hat A\rho(x,\xi;\tau)\hat A^{-1}=\rho(A(x,\xi);\tau), \qquad  x,\xi\in\rd, \ \tau\in\bR.
\end{equation}
This operator is not unique: if $\hat A'$ is another unitary operator satisfying (\ref{muAdef}), then $\hat A'=c\hat A$, for some constant $c\in\bC$, $|c|=1$. The set $\{\hat A : A\in Sp(d,\bR)\}$ is a group under composition and it admits the metaplectic group, denoted by $Mp(d,\bR)$, as subgroup. It is a realization of the two-fold cover of $Sp(d,\bR)$ and the projection:
 \begin{equation}\label{piMp}
	\pi^{Mp}:Mp(d,\bR)\to Sp(d,\bR)
\end{equation} is a group homomorphism with kernel $\ker(\pi^{Mp})=\{-id_{{L^2}},id_{{L^2}}\}$.

Throughout this paper, if $\hat A\in Mp(d,\bR)$, the matrix $A$ will always be the unique symplectic matrix such that $\pi^{Mp}(\hat A)=A$.

%
In what follows we list some important examples of metaplectic operators we are going to use next.
\begin{example}\label{es22} Consider the symplectic matrices $J$, $\cD_L$ and $V_C$  defined  in (\ref{defJ}) and (\ref{defDLVC}), respectively. Then,
	\begin{enumerate}
		\item[\it (i)] $\pi^{Mp}(\cF)=J$;
		\item[\it (ii)] if $\mathfrak{T}_E:=|\det(E)|^{1/2}\,f(E\cdot)$, then $\pi^{Mp}(\mathfrak{T}_E)=\cD_E$;
		\item[\it (iii)] if $\cF_2 $ is the Fourier transform with respect to the second variables, then $\pi^{Mp}(\cF_2)=\cA_{FT2}$, where $\cA_{FT2}\in Sp(2d,\bR)$ is the $4d\times4d$ matrix with block decomposition
		\begin{equation}\label{AFT2}
		\cA_{FT2}:=\begin{pmatrix}
			I_{d\times d} & 0_{d\times d} & 0_{d\times d} & 0_{d\times d}\\
			0_{d\times d} & 0_{d\times d} & 0_{d\times d} & I_{d\times d} \\
			0_{d\times d} & 0_{d\times d} & I_{d\times d} & 0_{d\times d}\\
			0_{d\times d} & -I_{d\times d} & 0_{d\times d} & 0_{d\times d}
		\end{pmatrix}.
		\end{equation}
	\end{enumerate}

\end{example}

\subsection{Metaplectic Wigner distributions}
Let $\hat\cA\in Mp(2d,\bR)$. The \textbf{metaplectic Wigner distribution} associated to $\hat\cA$ is defined as:\begin{equation}
    W_{\cA}(f,g):=\hat \cA(f\otimes \overline{g}),\quad f,g\in L^2(\rd).
\end{equation} \label{WApre}
The most popular  time-frequency representations  fall in the class of metaplectic Wigner distributions. Namely, the STFT can be represented as \begin{equation}\label{eq:stft-metap}
	 V_gf=\hat A_{ST}(f\otimes\bar g)
\end{equation}  where
\begin{equation}\label{AST}
	A_{ST}=\begin{pmatrix}
		I_{d\times d} & -I_{d\times d} & 0_{d\times d} & 0_{d\times d}\\
		0_{d\times d} & 0_{d\times d} & I_{d\times d} & I_{d\times d}\\
		0_{d\times d} & 0_{d\times d} & 0_{d\times d} & -I_{d\times d}\\
		-I_{d\times d} & 0_{d\times d} & 0_{d\times d} &0_{d\times d}
	\end{pmatrix}.
\end{equation}
The $\tau$-Wigner distribution defined in \eqref{tau-Wigner distribution} can be recast as $W_\tau(f,g)=\hat A_\tau(f\otimes\bar g)$, with
\begin{equation}\label{Atau}
	A_\tau=\begin{pmatrix}
		(1-\tau)I_{d\times d} & \tau I_{d\times d} & 0_{d\times d} & 0_{d\times d}\\
		0_{d\times d} & 0_{d\times d} & \tau I_{d\times d} & -(1-\tau)I_{d\times d}\\
		0_{d\times d} & 0_{d\times d} & I_{d\times d} & I_{d\times d}\\
		-I_{d\times d} & I_{d\times d} & 0_{d\times d} & 0_{d\times d}
	\end{pmatrix}.
\end{equation}
Similarly to the STFT, these time-frequency representations enjoy a \emph{reproducing formula}, cf. \cite[Lemma 3.6]{CGshiftinvertible}:
\begin{lemma}\label{intertFormula}
Consider $\hat\cA\in Mp(2d,\bR)$, with $\pi^{Mp}(\hat{\cA})=\cA\in Sp(2d,\bR)$, $\gamma,g\in\cS(\rd)$  such that $\langle \gamma,g\rangle\neq0$ and $f\in\cS'(\rd)$. Then,
	\begin{equation}\label{e6}
		W_\cA(f,g)=\frac{1}{\langle\gamma, g\rangle}\int_{\rdd}V_gf(w)W_\cA(\pi(w)\gamma,g) dw,
	\end{equation}
	with equality in $\cS'(\rdd)$, the integral being intended in the weak sense.
\end{lemma}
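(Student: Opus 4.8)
The plan is to obtain \eqref{e6} by applying the operator $W_\cA(\cdot,g)$ to the weak STFT inversion formula. Recall the classical fact (see, e.g., \cite{book} or \cite{CGshiftinvertible}) that for $f\in\cS'(\rd)$ and $\gamma,g\in\cS(\rd)$ with $\la\gamma,g\ra\neq0$ one has
\[
f=\frac{1}{\la\gamma,g\ra}\int_{\rdd}V_gf(w)\,\pi(w)\gamma\,dw,
\]
with equality in $\cS'(\rd)$ and the integral understood weakly; concretely, $V_gf$ is a continuous function of at most polynomial growth, and for every $\eta\in\cS(\rd)$ the map $w\mapsto\la\pi(w)\gamma,\eta\ra$ is rapidly decreasing, so that $\int_{\rdd}V_gf(w)\,\la\pi(w)\gamma,\eta\ra\,dw$ converges absolutely and equals $\la\gamma,g\ra\,\la f,\eta\ra$.

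Next I would record that $h\mapsto W_\cA(h,g)=\hat\cA(h\otimes\bar g)$ is a linear, weak-$*$ continuous map from $\cS'(\rd)$ into $\cS'(\rdd)$: tensoring with the fixed Schwartz function $\bar g$ is weak-$*$ continuous $\cS'(\rd)\to\cS'(\rdd)$, and the metaplectic operator $\hat\cA$ is a weak-$*$ continuous automorphism of $\cS'(\rdd)$ which, together with its inverse, restricts to an automorphism of $\cS(\rdd)$. Hence, for a fixed test function $\Phi\in\cS(\rdd)$, the functional $h\mapsto\la W_\cA(h,g),\Phi\ra$ is linear and weak-$*$ continuous on $\cS'(\rd)$; since the dual of $\cS'(\rd)$ endowed with the weak-$*$ topology is $\cS(\rd)$, there is $\eta=\eta_{\Phi,g,\cA}\in\cS(\rd)$ with $\la W_\cA(h,g),\Phi\ra=\la h,\eta\ra$ for all $h\in\cS'(\rd)$ (explicitly, one may write $\la W_\cA(h,g),\Phi\ra=\la h\otimes\bar g,\hat\cB\Phi\ra$ with $\hat\cB\in Mp(2d,\bR)$ a representative of $\hat\cA^{-1}$ and contract the second tensor slot against $\bar g$).

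Then I would combine the two ingredients. Testing against $\Phi\in\cS(\rdd)$ and using, in order, the representing function $\eta$, the weak inversion formula for $f$ against the Schwartz function $\eta$, and the identity $\la W_\cA(\pi(w)\gamma,g),\Phi\ra=\la\pi(w)\gamma,\eta\ra$ (valid since $\pi(w)\gamma\in\cS(\rd)\subset\cS'(\rd)$), one gets
\begin{align*}
\la W_\cA(f,g),\Phi\ra &=\la f,\eta\ra=\frac{1}{\la\gamma,g\ra}\int_{\rdd}V_gf(w)\,\la\pi(w)\gamma,\eta\ra\,dw\\
&=\frac{1}{\la\gamma,g\ra}\int_{\rdd}V_gf(w)\,\la W_\cA(\pi(w)\gamma,g),\Phi\ra\,dw.
\end{align*}
The last integral converges absolutely, since $|V_gf(w)|$ grows at most polynomially while $|\la W_\cA(\pi(w)\gamma,g),\Phi\ra|=|\la\pi(w)\gamma,\eta\ra|$ decays rapidly; by definition it therefore equals the pairing of the weakly defined element $\frac{1}{\la\gamma,g\ra}\int_{\rdd}V_gf(w)W_\cA(\pi(w)\gamma,g)\,dw\in\cS'(\rdd)$ with $\Phi$. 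As $\Phi$ is arbitrary, this proves the reproducing formula \eqref{e6}.

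The only delicate points — and the closest thing to an obstacle — are functional-analytic bookkeeping: the weak-$*$ continuity of $\hat\cA$ (and of tensoring with $\bar g$) on tempered distributions, the identification of the weak-$*$ dual of $\cS'(\rd)$ with $\cS(\rd)$ that produces the representing Schwartz function $\eta$, and the polynomial growth of $V_gf$ for $f\in\cS'(\rd)$, which is precisely what licenses interchanging the operator $W_\cA(\cdot,g)$ with the weak vector-valued integral. None of these is hard, but each must be invoked explicitly.
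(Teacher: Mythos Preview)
Your argument is correct. The paper does not give its own proof of this lemma; it simply quotes it from \cite[Lemma 3.6]{CGshiftinvertible}, so there is no in-paper proof to compare against. Your approach --- apply the weak STFT inversion formula to $f$ and then push the weak-$*$ continuous linear map $h\mapsto W_\cA(h,g)=\hat\cA(h\otimes\bar g)$ through the vector-valued integral by representing $\Phi\mapsto\langle W_\cA(\cdot,g),\Phi\rangle$ via a Schwartz function $\eta$ --- is exactly the standard and natural route, and it is the one taken in the cited reference as well.
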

From the right-hand side we infer that the key point becomes the action of $W_\cA$ on the time-frequency shift $\pi(w)$, which can be computed explicitly. 
For $\cA\in Sp(2d,\bR)$, it will be useful to consider its block decomposition:
 \begin{equation}\label{blockA}
	\cA=\begin{pmatrix}
		A_{11} & A_{12} & A_{13} & A_{14}\\
		A_{21} & A_{22} & A_{23} & A_{24}\\
		A_{31} & A_{32} & A_{33} & A_{34}\\
		A_{41} & A_{42} & A_{43} & A_{44}
	\end{pmatrix}.
\end{equation}

We recall the following continuity properties.
\begin{proposition}\label{prop25}
	Let $W_\cA$ be a metaplectic Wigner distribution. Then,\\
	 $W_\cA:L^2(\rd)\times L^2(\rd) \to L^2(\rdd) $ is bounded.
	 The same result holds if we replace  $L^2$ by $\cS$ or $\cS'$.
\end{proposition}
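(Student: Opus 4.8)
The plan is to factor the metaplectic Wigner distribution as a composition
\[
W_\cA(f,g)=\hat\cA\bigl(\Theta(f,g)\bigr),\qquad \Theta(f,g):=f\otimes\overline g ,
\]
and to check, in each of the three settings $L^2$, $\cS$, $\cS'$, that the bilinear map $\Theta$ and the linear map $\hat\cA$ are bounded (continuous) on the relevant space; boundedness of $W_\cA$ then follows by composition.

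\emph{The $L^2$ case.} By Fubini, $\|f\otimes\overline g\|_{L^2(\rdd)}=\|f\|_{L^2(\rd)}\|g\|_{L^2(\rd)}$, so $\Theta$ is bounded from $L^2(\rd)\times L^2(\rd)$ to $L^2(\rdd)$. Since $\hat\cA\in Mp(2d,\bR)$ is by construction a unitary operator on $L^2(\rdd)$ (Subsection~\ref{subsec:26}), we get $\|W_\cA(f,g)\|_{L^2(\rdd)}=\|f\|_{L^2}\|g\|_{L^2}$, which is the asserted boundedness (with sharp constant $1$).

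\emph{The $\cS$ case.} The map $\Theta$ is jointly continuous from $\cS(\rd)\times\cS(\rd)$ to $\cS(\rdd)$, since each Schwartz seminorm of $f\otimes\overline g$ is dominated by a product of Schwartz seminorms of $f$ and of $\overline g$, and conjugation is continuous on $\cS(\rd)$. It then suffices to recall that every metaplectic operator restricts to a topological automorphism of $\cS(\rdd)$: indeed $Mp(2d,\bR)$ is generated by the Fourier transform, the dilations $\mathfrak T_E$, and the multiplications by unimodular quadratic chirps, i.e.\ the metaplectic operators attached to $J$, $\cD_E$ and $V_C$ (Example~\ref{es22} and Subsection~\ref{subsec:26}), each of which is plainly a topological automorphism of $\cS(\rdd)$; composing, so is $\hat\cA$. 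Hence $W_\cA=\hat\cA\circ\Theta$ is continuous from $\cS(\rd)\times\cS(\rd)$ to $\cS(\rdd)$.

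\emph{The $\cS'$ case.} By duality (using that $\hat\cA^{-1}$ is continuous on $\cS(\rdd)$), $\hat\cA$ extends to a topological automorphism of $\cS'(\rdd)$. The bilinear tensor map $\Theta\colon \cS'(\rd)\times\cS'(\rd)\to\cS'(\rdd)$ is well defined, as recalled in the Notation, with $\cS'(\rd)\,\widehat\otimes\,\cS'(\rd)\cong\cS'(\rdd)$ since $\cS$ is nuclear, and it is hypocontinuous, equivalently separately continuous; conjugation is continuous on $\cS'(\rd)$. Therefore $W_\cA=\hat\cA\circ\Theta$ inherits the same continuity, which is the claimed boundedness. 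The only delicate point is exactly here: the tensor product $\cS'\times\cS'\to\cS'$ is not jointly continuous for the strong topologies, so ``bounded'' in the $\cS'$ statement must be read as separate/hypo-continuity (equivalently, as mapping bounded sets to bounded sets); with that convention the verifications are routine, and in all three cases the substance is simply that metaplectic operators are topological automorphisms of $L^2(\rdd)$, $\cS(\rdd)$ and $\cS'(\rdd)$.
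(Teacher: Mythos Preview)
Your proof is correct and follows the natural line of argument. Note, however, that the paper does not actually supply a proof of this proposition: it is stated as a recalled continuity property (``We recall the following continuity properties''), so there is no author's proof to compare against. Your factorization $W_\cA=\hat\cA\circ\Theta$ together with the observation that metaplectic operators are topological automorphisms of $L^2$, $\cS$ and $\cS'$ is exactly the standard justification, and your remark on hypocontinuity in the $\cS'$ case is the right caveat. One small point: the paper's Example~\ref{es22} does not explicitly list the chirp multiplication for $V_C$, but this is well known and your use of it is legitimate.
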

Since metaplectic operators are unitary, for all $f_1,f_2,g_1,g_2\in L^2(\rd)$,
\begin{equation}\label{Moyal}
	\la W_\cA(f_1,f_2),W_\cA(g_1,g_2)\ra = \la f_1,g_1\ra \overline{\la f_2,g_2\ra}.
\end{equation}

$W_{\A}$ is said to be \textit{covariant} if it satisfies the covariance property in \eqref{eq:covarianceid}.
 The following proposition provides a complete characterization of symplectic matrices that give rise to covariant metaplectic Wigner distribution.
\begin{proposition}[Proposition 4.4 in \cite{CR2021}]\label{propcovariant}
    Let $\A\in Sp(2d,\bR)$, then $W_{\cA}$ is covariant if and only if the block decomposition \eqref{blockA} of $\A$ is of the form:\begin{equation}\label{eq:covariant}
        \A=\begin{pmatrix}
            A_{11}&I_{d\times d}-A_{11}&A_{13}&A_{13}\\A_{21}&-A_{21}&I_{d\times d}-A_{11}^T&-A_{11}^T\\0_{d\times d}&0_{d\times d}&I_{d\times d}&I_{d\times d}\\
            -I_{d\times d}&I_{d\times d}&0_{d\times d}&0_{d\times d}
        \end{pmatrix},
    \end{equation}
    with $A_{13}=A_{13}^T$ and $A_{21}=A_{21}^T$.
\end{proposition}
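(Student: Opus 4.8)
The plan is to recast the covariance identity \eqref{eq:covarianceid} as a single linear identity for the matrix $\cA$, and then to solve that identity by exploiting the symplectic relation $\cA^{T}J\cA=J$ (where $J$ now denotes the $4d\times 4d$ standard symplectic matrix). First I would carry out the reduction. Writing $z=(z_{1},z_{2})$ and using $\pi(z)f(t)=e^{2\pi iz_{2}t}f(t-z_{1})$, a direct computation shows that $\pi(z)f\otimes\overline{\pi(z)g}$ equals the time-frequency shift of $f\otimes\overline{g}\in L^{2}(\rdd)$ by the vector $\cE z:=(z_{1},z_{1},z_{2},-z_{2})\in\bR^{4d}$, the coordinates of $\bR^{4d}$ being ordered ``all positions, then all frequencies'' as fixed in \eqref{AFT2}; here the modulation factors $e^{2\pi iz_{2}t}$ coming from $\pi(z)f$ and $e^{-2\pi iz_{2}s}$ coming from $\overline{\pi(z)g}$ combine without leaving any Heisenberg cocycle. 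Applying $\hat\cA$, inserting $\hat\cA^{-1}\hat\cA$ and using the intertwining relation \eqref{muAdef} for the Heisenberg group of dimension $2d$, I obtain
\[
W_{\cA}(\pi(z)f,\pi(z)g)=c(\cA,z)\,\pi(\cA\cE z)\,W_{\cA}(f,g),\qquad |c(\cA,z)|=1,
\]
where $\pi(\cdot)$ now denotes the time-frequency shift on $L^{2}(\rdd)$. Since $T_{z}$ is precisely that shift by the vector $(z_{1},z_{2},0,0)$, since $\{W_{\cA}(f,g):f,g\in L^{2}(\rd)\}$ spans a dense subspace of $L^{2}(\rdd)$ (being $\hat\cA$ applied to the span of the tensors $f\otimes\overline{g}$), and since two distinct time-frequency shifts are never scalar multiples of one another, the covariance \eqref{eq:covarianceid} holds for all $f,g$ if and only if $\cA\cE z=(z_{1},z_{2},0,0)$ for every $z$; moreover, in that case the phase $c(\cA,z)$ equals $1$ automatically, because both $\cE z$ and $(z_{1},z_{2},0,0)$ have vanishing pairing between their position and frequency halves. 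In the block notation \eqref{blockA}, this condition is exactly the system of eight block identities
\[
\begin{aligned}
&A_{11}+A_{12}=I,\quad A_{13}=A_{14},\quad A_{21}+A_{22}=0,\quad A_{23}-A_{24}=I,\\
&A_{31}+A_{32}=0,\quad A_{33}=A_{34},\quad A_{41}+A_{42}=0,\quad A_{43}=A_{44}.
\end{aligned}
\]

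Next I would impose symplecticity. Let $c_{1},\dots,c_{4}$ denote the $4d\times d$ column-blocks of $\cA$; then $\cA^{T}J\cA=J$ is equivalent to the ten relations $c_{i}^{T}Jc_{j}=(J)_{ij}$ for $i\le j$, where $(J)_{ij}$ is the $(i,j)$ block of $J$. The block identities above let me write $c_{2}=e_{1}-c_{1}$ and $c_{4}=c_{3}-e_{2}$, where $e_{1},e_{2}$ are the first two column-blocks of the identity $I_{4d}$; substituting these into the ten relations, the three relations $c_{2}^{T}Jc_{2}=0$, $c_{2}^{T}Jc_{4}=I$ and $c_{4}^{T}Jc_{4}=0$ turn out to be consequences of the rest, while the remaining seven, together with the block identities, force
\[
\begin{aligned}
&A_{41}=-I,\quad A_{33}=I,\quad A_{43}=0,\quad A_{31}=0,\\
&A_{23}=I-A_{11}^{T},\quad A_{13}=A_{13}^{T},\quad A_{21}=A_{21}^{T}.
\end{aligned}
\]
Feeding these back into the block identities determines the remaining blocks, $A_{12}=I-A_{11}$, $A_{22}=-A_{21}$, $A_{14}=A_{13}$, $A_{24}=-A_{11}^{T}$, $A_{32}=0$, $A_{34}=I$, $A_{42}=I$, $A_{44}=0$, which is precisely the matrix \eqref{eq:covariant} with $A_{13}$ and $A_{21}$ symmetric. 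For the converse, it suffices to note that any $\cA\in Sp(2d,\bR)$ of the form \eqref{eq:covariant} (with $A_{13},A_{21}$ symmetric) manifestly satisfies the eight block identities, hence yields a covariant $W_{\cA}$ by the first step.

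The main obstacle will be the linear algebra of the second step: one has to run through the seven symplectic relations in a workable order so that each produces one new block of $\cA$, or one symmetry constraint --- for instance $c_{1}^{T}Jc_{3}=I$ together with $c_{1}^{T}Jc_{4}=0$ gives $A_{41}=-I$, then $c_{2}^{T}Jc_{3}=0$ gives $A_{33}=I$, and so on down the list. A secondary but genuine point of care is the convention bookkeeping in the first step --- identifying $\pi^{Mp}(\hat\cA)$ with the matrix $\cA$ acting on $(z_{1},z_{1},z_{2},-z_{2})$ in the correct coordinate ordering, and verifying that the cocycle phase $c(\cA,z)$ drops out once the linear condition is met. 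As an alternative to the direct computation, one could instead read off the action of $W_{\cA}$ on time-frequency shifts from the reproducing formula of Lemma~\ref{intertFormula} and argue in the same way.
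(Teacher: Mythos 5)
This proposition is quoted by the paper from \cite{CR2021} (Proposition 4.4 there) and is not proved in the text, so there is no internal proof to compare against; your argument is, however, correct and is essentially the standard derivation. Your reduction in the first step is sound: $\pi(z)f\otimes\overline{\pi(z)g}$ is indeed the time-frequency shift of $f\otimes\overline{g}$ by $\cE z=(z_1,z_1,z_2,-z_2)$, the intertwining relation \eqref{muAdef} converts covariance into the operator identity $c(\cA,z)\pi(\cA\cE z)=\pi(z_1,z_2,0,0)$ on the dense span of the $W_{\cA}(f,g)$, and the cocycle $c(\cA,z)=e^{\pi i(\xi x-\xi'x')}$ (with $(x,\xi)=\cE z$, $(x',\xi')=\cA\cE z$) is trivial precisely because both vectors have vanishing position-frequency pairing. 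The resulting eight block identities are the ones you list. I also checked the linear-algebra step: with $c_2=e_1-c_1$ and $c_4=c_3-e_2$, the relation $c_1^TJc_3-c_1^TJc_4=c_1^TJe_2=-A_{41}^T$ gives $A_{41}=-I$, $c_2^TJc_3=A_{33}-I$ gives $A_{33}=I$, $c_3^TJc_4$ gives $A_{43}=0$, the pair $c_1^TJc_1=0$ and $c_1^TJc_2=0$ gives $A_{31}=0$ and $A_{21}=A_{21}^T$, $c_1^TJc_3=I$ then gives $A_{23}=I-A_{11}^T$, and $c_3^TJc_3=A_{13}^T-A_{13}$ gives the symmetry of $A_{13}$; the three relations you claim are redundant, namely $c_2^TJc_2=0$, $c_2^TJc_4=I$ and $c_4^TJc_4=0$, do indeed follow from the others. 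A dimension count confirms consistency: the family \eqref{eq:covariant} has $2d^2+d$ free parameters, matching the dimension of the subgroup of symplectic maps fixing the Lagrangian $\{(z_1,z_2,0,0)\}$ pointwise. The only presentational caveat is that the order in which the seven surviving relations are processed matters (e.g.\ $A_{31}=0$ must be extracted before $c_1^TJc_3=I$ can be read as $A_{23}=I-A_{11}^T$), which you already flag as the main point of care.
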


\subsection{\textbf{Metaplectic pseudodifferential operators}}
These pseudodifferential operators were introduced in \cite{CR2021} and generalize the classical ones.
	{\begin{definition}\label{defMetaplPsiDo}
	Let $a\in\mathcal{S}'(\mathbb{R}^{2d})$. The \textbf{metaplectic pseudodifferential operator} with \textbf{symbol} $a$ and symplectic matrix $\cA$ is the operator $\operatorname{Op}_\mathcal{A}(a):\mathcal{S}(\mathbb{R}^d)\to\mathcal{S}'(\mathbb{R}^{d})$ such that
\begin{equation}\label{eq:metapseudo}
			\langle \operatorname{Op}_\mathcal{A}(a)f,g\rangle=\langle a,W_\mathcal{A}({g},f)\rangle, \quad g\in\mathcal{S}(\mathbb{R}^d).	
\end{equation}
	\end{definition}
	Observe that this operator is well defined by Proposition \ref{prop25}. Moreover, when the context requires to stress the matrix $\cA$ that defines $\operatorname{Op}_\cA$, we refer to $\operatorname{Op}_\cA$ to as the \textbf{$\cA$-pseudodifferential operator} with {symbol} $a$. }
	{\begin{remark}
		In principle, the full generality of metaplectic framework provides a wide variety of unexplored time-frequency representations that fit many different contexts. Namely, in Definition \ref{defMetaplPsiDo}, the symplectic matrix $\cA$ plays the role of a quantization and the quantization of a pseudodifferential operator is typically chosen depending on the the properties that must be satisfied in a given setting.
	\end{remark}}
	
	\begin{example}
		Definition \ref{defMetaplPsiDo} in the case of $\cA_{1/2}\in Sp(2d,\mathbb{R})$ provides the well-known Weyl quantization for pseudodifferential operators, cf. \eqref{I7} in the introduction.
	\end{example}
	
 The following issue shows how the symbols of {metaplectic pseudodifferential operators} change when we modify the symplectic matrix. 
	\begin{lemma}[Lemma 3.2. in \cite{CGR2022}]\label{lemmaComm}
		Consider $\mathcal{A},\mathcal{B}\in Sp(2d,\mathbb{R})$ and $a,b\in\mathcal{S}'(\mathbb{R}^{2d})$. Then, 
		\begin{equation}\label{changeMatrix}\operatorname{Op}_{\mathcal{A}}(a)=\operatorname{Op}_\mathcal{B}(b)\quad
		\iff \quad b= \hat{\mathcal{B}}\hat{\mathcal{A}}^{-1}(a).
		\end{equation}
	\end{lemma}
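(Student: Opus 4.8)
The plan is to reduce the stated equivalence to the defining identity \eqref{eq:metapseudo} combined with the unitarity of metaplectic operators. Throughout, $\hat\cA,\hat\cB\in Mp(2d,\bR)$ denote (fixed) metaplectic operators with $\pi^{Mp}(\hat\cA)=\cA$ and $\pi^{Mp}(\hat\cB)=\cB$. First I would unfold the definition of $\operatorname{Op}_\cA$: for all $f,g\in\cS(\rd)$,
\[
\langle\operatorname{Op}_\cA(a)f,g\rangle=\langle a,W_\cA(g,f)\rangle=\langle a,\hat\cA(g\otimes\overline f)\rangle .
\]
The key move is to transfer $\hat\cA$ from the second slot of the bracket onto the symbol $a$. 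Since $\hat\cA$ is unitary on $L^2(\rdd)$, its Hilbert-space adjoint equals $\hat\cA^{-1}$, which is again a metaplectic operator and hence a continuous automorphism of both $\cS(\rdd)$ and $\cS'(\rdd)$; consequently the extension of $\hat\cA$ to $\cS'(\rdd)$ implicit in the definition of $W_\cA$ satisfies $\langle a,\hat\cA\psi\rangle=\langle\hat\cA^{-1}a,\psi\rangle$ for every $a\in\cS'(\rdd)$ and $\psi\in\cS(\rdd)$. Taking $\psi=g\otimes\overline f$ gives
\[
\langle\operatorname{Op}_\cA(a)f,g\rangle=\langle\hat\cA^{-1}a,\,g\otimes\overline f\rangle
\quad\text{and}\quad
\langle\operatorname{Op}_\cB(b)f,g\rangle=\langle\hat\cB^{-1}b,\,g\otimes\overline f\rangle .
\]

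Next I would conclude by a density argument. The operators $\operatorname{Op}_\cA(a)$ and $\operatorname{Op}_\cB(b)$ coincide as maps $\cS(\rd)\to\cS'(\rd)$ precisely when $\langle\hat\cA^{-1}a-\hat\cB^{-1}b,\,g\otimes\overline f\rangle=0$ for all $f,g\in\cS(\rd)$. Since complex conjugation is a linear bijection of $\cS(\rd)$, the family $\{g\otimes\overline f:f,g\in\cS(\rd)\}$ equals $\{g\otimes h:g,h\in\cS(\rd)\}$, whose linear span is dense in $\cS(\rdd)$ (the Schwartz-class counterpart of the density statement recalled in Section~\ref{sec:preliminaries}). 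Hence the tempered distribution $\hat\cA^{-1}a-\hat\cB^{-1}b$ vanishes on a dense subspace of $\cS(\rdd)$ and is therefore $0$, i.e. $\hat\cA^{-1}a=\hat\cB^{-1}b$ in $\cS'(\rdd)$. Applying the automorphism $\hat\cB$ of $\cS'(\rdd)$ to both sides yields the equivalent identity $b=\hat\cB\hat\cA^{-1}(a)$. Since every step in this chain is an equivalence, reading it in reverse proves the converse implication as well.

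The computation is essentially routine; the only points demanding care are (i) justifying the transfer $\langle a,\hat\cA\psi\rangle=\langle\hat\cA^{-1}a,\psi\rangle$, which rests on the unitarity of $\hat\cA$ on $L^2(\rdd)$ together with $\hat\cA(\cS(\rdd))=\cS(\rdd)=\hat\cA^{-1}(\cS(\rdd))$, and (ii) the density of $\mathrm{span}\{g\otimes\overline f:f,g\in\cS(\rd)\}$ in $\cS(\rdd)$, which is what lets us pass from ``vanishing on all elementary tensors'' to ``the zero tempered distribution''. Beyond this bookkeeping with the sesquilinear bracket, no genuine obstacle is expected.
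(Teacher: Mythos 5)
Your argument is correct and is exactly the standard kernel-identification proof of this result: the paper itself does not reprove the lemma (it is quoted from \cite{CGR2022}), but your chain $\langle\operatorname{Op}_\cA(a)f,g\rangle=\langle\hat\cA^{-1}a,\,g\otimes\overline f\rangle$ followed by density of $\mathrm{span}\{g\otimes h\}$ in $\cS(\mathbb{R}^{2d})$ is precisely the argument behind it, and is the same mechanism the paper uses immediately afterwards in Corollary~\ref{cor:kernel}. The two points you flag as needing care (the adjoint transfer via unitarity of $\hat\cA$ on $L^2(\mathbb{R}^{2d})$ together with its continuous extension to $\cS'$, and the density of elementary tensors) are indeed the only nontrivial ingredients, and you handle both correctly.
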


As a direct consequence of Lemma \ref{lemmaComm} we get the following corollary, which provides the distributional kernel of $\operatorname{Op}_\cA$.
	
	\begin{cor} \label{cor:kernel}Consider $\cA\in Sp(2d,\mathbb{R})$, $a\in \mathcal{S}'(\rdd)$. Then, for all $f,g\in\mathcal{S}(\rd)$,
	\begin{equation}\label{kernelMPO}
		\langle \operatorname{Op}_\cA(a)f,g\rangle=\langle k_\cA(a),g\otimes \bar f\rangle,		
	\end{equation}
	where the kernel is given by $k_\cA(a)=\hat \cA^{-1}a$. 
	\end{cor}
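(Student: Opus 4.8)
The plan is to derive the kernel formula directly from the defining identity \eqref{eq:metapseudo} together with Lemma \ref{lemmaComm}. Recall from the definition of the metaplectic Wigner distribution that $W_\cA(g,f)=\hat\cA(g\otimes\bar f)$. Therefore, for all $f,g\in\cS(\rd)$,
\[
\langle \operatorname{Op}_\cA(a)f,g\rangle=\langle a,W_\cA(g,f)\rangle=\langle a,\hat\cA(g\otimes\bar f)\rangle.
\]
Now I would move $\hat\cA$ to the other side of the pairing. Since metaplectic operators are unitary on $L^2$ and extend to automorphisms of $\cS'$ with transpose given by the inverse (up to the conjugate-linear convention for $\langle\cdot,\cdot\rangle$, one uses that $\hat\cA^{-1}$ is the adjoint, and the arguments $g\otimes\bar f$ and $a$ can be taken real in the relevant duality so no conjugation issue arises — more precisely one works with the bilinear duality $\cS'(\rdd)\times\cS(\rdd)$), we get
\[
\langle a,\hat\cA(g\otimes\bar f)\rangle=\langle \hat\cA^{-1}a,g\otimes\bar f\rangle.
\]
Setting $k_\cA(a):=\hat\cA^{-1}a\in\cS'(\rdd)$ — which is well defined since $\hat\cA^{-1}\in Mp(2d,\bR)$ acts continuously on $\cS'(\rdd)$ — yields precisely \eqref{kernelMPO}.

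Alternatively, and perhaps more in the spirit of ``direct consequence of Lemma \ref{lemmaComm},'' I would argue as follows: the Weyl (Kohn--Nirenberg–type) kernel representation is the special case $\cB=\cA_{FT2}$, for which $W_{\cA_{FT2}}(g,f)=\cF_2(g\otimes\bar f)$ and a standard computation gives $\operatorname{Op}_{\cA_{FT2}}(b)f(x)=\int b(x,y)f(y)\,dy$, i.e. $b$ is exactly the Schwartz kernel of the operator (here one should double-check the convention so that $\langle\operatorname{Op}_{\cA_{FT2}}(b)f,g\rangle=\langle b,g\otimes\bar f\rangle$; this is the content of identifying $\cA_{FT2}$ with the integral-kernel quantization, cf. Example \ref{es22}(iii)). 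Then by Lemma \ref{lemmaComm}, $\operatorname{Op}_\cA(a)=\operatorname{Op}_{\cA_{FT2}}(b)$ if and only if $b=\widehat{\cA_{FT2}}\,\hat\cA^{-1}(a)=\cF_2\hat\cA^{-1}(a)$. Since the Schwartz kernel $k$ and the object $b$ paired as $\langle b,g\otimes\bar f\rangle$ differ exactly by the partial Fourier transform $\cF_2$, we recover $k_\cA(a)=\hat\cA^{-1}a$, i.e. the kernel in the sense $\langle\operatorname{Op}_\cA(a)f,g\rangle=\langle k_\cA(a),g\otimes\bar f\rangle$ is $\hat\cA^{-1}a$.

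The only genuine subtlety — and hence the step I would be most careful about — is bookkeeping the conjugation/linearity conventions: the bracket $\langle\cdot,\cdot\rangle$ is conjugate-linear in the second slot, while $f\otimes\bar f$ carries a conjugate in the second tensor factor, and $\hat\cA$ need not commute with complex conjugation. The clean way around this is to pass to the bilinear duality (no conjugation) by writing $\langle u,v\rangle=(u,\bar v)$ where $(\cdot,\cdot)$ is bilinear, observe that $\hat\cA$ is a continuous automorphism of $\cS'(\rdd)$ whose transpose with respect to $(\cdot,\cdot)$ is $\hat\cA^{-1}$ composed with the conjugation conjugate (a unitary), and then note that all the factors of conjugation cancel because they appear an even number of times in $g\otimes\bar f$ versus the kernel pairing $g\otimes\bar f$ on the right-hand side of \eqref{kernelMPO}. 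Everything else is a one-line transposition, so the corollary follows immediately once this convention is pinned down.
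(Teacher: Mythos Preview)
Your first argument is correct and is exactly the paper's proof: the paper plugs $\mathcal{B}=I_{4d\times 4d}$ into Lemma~\ref{lemmaComm}, which amounts to observing $W_I(g,f)=g\otimes\bar f$ and hence $\langle \operatorname{Op}_\cA(a)f,g\rangle=\langle a,\hat\cA(g\otimes\bar f)\rangle=\langle \hat\cA^{-1}a,g\otimes\bar f\rangle$. Your detour through $\cA_{FT2}$ is unnecessary (and introduces the bookkeeping you flag), and your worry about conjugation is overblown: since the bracket extends the sesquilinear $L^2$ inner product and $\hat\cA$ is unitary, $\langle a,\hat\cA\phi\rangle=\langle \hat\cA^{-1}a,\phi\rangle$ holds directly without any parity-of-conjugations argument.
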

	\begin{proof}
	Plug $\mathcal{B}=I_{4d\times 4d}$ into (\ref{changeMatrix}) to get (\ref{kernelMPO}). 
	\end{proof}
	
Another immediate consequence of Lemma \ref{lemmaComm}
is that every metaplectic pseudodifferential operator of the form $\operatorname{Op}_{\cA}(a)$ can be written as a Weyl operator $\operatorname{Op}_w(\sigma)$ with symbol $\sigma=\hat \cA_{1/2}\hat \cA^{-1}(a)$, which is called the \textit{Weyl symbol} of $\operatorname{Op}_{\cA}(a)$. We recall an important theorem concerning sufficient conditions for $\operatorname{Op}_w(\sigma)$ to belong to the Schatten classes, for details see Theorem 3.1. in \cite{corderogrochenig}.
\begin{theorem}\label{teoweylsymb}
    \begin{itemize}
        \item[i)] If $1\leq p\leq 2$ and $\sigma\in M^p(\rdd)$, then $\operatorname{Op}_w(\sigma)\in \cS_p$ and $\norm{\operatorname{Op}_w(\sigma)}_{\cS_p}\lesssim \norm{\sigma}_{M^{p}}$. 
        \item[ii)] If $2\leq p\leq \i$ and $\sigma \in M^{p,p'}(\rdd)$, then $\operatorname{Op}_w(\sigma)\in \cS_p$ and $\norm{\operatorname{Op}_w(\sigma)}_{\cS_p}\lesssim \norm{\sigma}_{M^{p,p'}}$. 
    \end{itemize}

\end{theorem}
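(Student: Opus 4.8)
The plan is to reduce the two statements to three endpoint estimates and then fill in the intermediate ranges by complex interpolation, using that both the Schatten scale $(\cS_p)_{1\le p\le\infty}$ and the modulation-space scale $M^{p,q}(\rdd)$ are complex interpolation scales with the expected index relations. Concretely, the three endpoints I would establish are
\[
\operatorname{Op}_w:M^1(\rdd)\to\cS_1,\qquad \operatorname{Op}_w:L^2(\rdd)=M^2(\rdd)\to\cS_2,\qquad \operatorname{Op}_w:M^{\infty,1}(\rdd)\to\mathcal{B}(\lrd),
\]
all bounded; part (i) then follows by interpolating the first two endpoints and part (ii) by interpolating the last two, with $M^2=M^{2,2}$ playing the role of the middle anchor point (this is precisely why the threshold $p=2$ appears).

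First I would dispose of the Hilbert--Schmidt endpoint, which is classical. By Corollary~\ref{cor:kernel} the Schwartz kernel of $\operatorname{Op}_w(\sigma)$ is $\widehat{\cA_{1/2}}^{-1}\sigma$, and since metaplectic operators are unitary on $L^2(\rdd)$, the map $\sigma\mapsto k_\sigma$ is an isometry of $L^2(\rdd)$; as $T\in\cS_2$ exactly when its kernel lies in $L^2(\rdd)$, with $\norm{T}_{\cS_2}=\norm{k_T}_{L^2}$, and $M^2(\rdd)=L^2(\rdd)$ with equivalent norms, we obtain $\norm{\operatorname{Op}_w(\sigma)}_{\cS_2}\asymp\norm{\sigma}_{M^2}$.

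Next I would treat the trace-class endpoint. Fix $g_0\in\cS(\rd)$ with $\norm{g_0}_{L^2}=1$ and put $\Psi:=W(g_0,g_0)\in\cS(\rdd)$, so $\norm{\Psi}_{L^2}=1$ by \eqref{Moyal}. Using $\Psi$ as window, the STFT inversion formula gives $\sigma=\int_{\Refn}V_\Psi\sigma(\mu)\,\pi(\mu)\Psi\,d\mu$ in $\cS'(\rdd)$, where $\pi(\mu)$, $\mu\in\Refn$, denotes the time-frequency shift on $L^2(\rdd)$. Applying $\operatorname{Op}_w$ term by term, one has $\operatorname{Op}_w(\Psi)=g_0\otimes\bar g_0$, the rank-one orthogonal projection onto $\bC g_0$ (by \eqref{I7} and \eqref{Moyal}), while the covariance of the Weyl transform under time-frequency shifts and metaplectic operators forces every $\operatorname{Op}_w(\pi(\mu)\Psi)$ to be a unitary conjugate of $\operatorname{Op}_w(\Psi)$ up to a unimodular factor; hence $\norm{\operatorname{Op}_w(\pi(\mu)\Psi)}_{\cS_1}=\norm{g_0}_{L^2}^2=1$ for all $\mu$, and therefore
\[
\norm{\operatorname{Op}_w(\sigma)}_{\cS_1}\le\int_{\Refn}\abs{V_\Psi\sigma(\mu)}\,d\mu=\norm{\sigma}_{M^1}.
\]
For the boundedness endpoint $M^{\infty,1}(\rdd)\to\mathcal{B}(\lrd)$ I would invoke the Gröchenig--Heil / Sjöstrand theorem (obtainable by the same superposition argument, the $M^{\infty,1}$-norm being exactly what controls the summation over the frequency variable of the phase-space plane).

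Finally I would conclude by complex interpolation. Since $(\cS_{p_0},\cS_{p_1})_{[\theta]}=\cS_p$ and $(M^{p_0,q_0},M^{p_1,q_1})_{[\theta]}=M^{p,q}$ with $1/p=(1-\theta)/p_0+\theta/p_1$ and $1/q=(1-\theta)/q_0+\theta/q_1$, interpolating $M^1\to\cS_1$ with $M^2\to\cS_2$ over $\theta\in[0,1]$ yields $\operatorname{Op}_w:M^p(\rdd)\to\cS_p$ for $1\le p\le2$, which is (i); and interpolating $M^{2,2}\to\cS_2$ with $M^{\infty,1}\to\mathcal{B}(\lrd)$ over $\theta\in[0,1)$ yields $\operatorname{Op}_w:M^{p,p'}(\rdd)\to\cS_p$ for $2\le p<\infty$ (here $1/p=(1-\theta)/2$ and $1/p'=(1-\theta)/2+\theta$, so $1/p+1/p'=1$), the case $p=\infty$ being the third endpoint itself, which is (ii). The only step that is not essentially bookkeeping, and hence the main obstacle, is the identification in the trace-class endpoint of $\operatorname{Op}_w(\pi(\mu)W(g_0,g_0))$ with a unitary conjugate of a rank-one projection: this rests on the symplectic covariance $\widehat{\mathcal S}\operatorname{Op}_w(\sigma)\widehat{\mathcal S}^{-1}=\operatorname{Op}_w(\sigma\circ\mathcal S^{-1})$ for $\widehat{\mathcal S}\in Mp(d,\bR)$ together with the fact that translating or modulating a Weyl symbol corresponds to conjugating the operator by a time-frequency shift of $\lrd$. (When $p=\infty$, the assertion $\operatorname{Op}_w(\sigma)\in\cS_\infty$ is to be read as boundedness on $\lrd$.)
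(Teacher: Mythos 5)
The paper states this result without proof, citing Theorem 3.1 of \cite{corderogrochenig}, and your argument --- the three endpoints $M^1(\rdd)\to\cS_1$ (via the superposition of the symbol into time-frequency shifts of $W(g_0,g_0)$, whose Weyl quantizations are rank-one with unit trace norm), $L^2(\rdd)\to\cS_2$ (unitarity of the kernel map), and $M^{\infty,1}(\rdd)\to\mathcal{B}(\lrd)$ (Gr\"ochenig--Heil/Sj\"ostrand), followed by complex interpolation on both scales --- is precisely the proof given there. It is correct; the only cosmetic imprecision is that $\operatorname{Op}_w(\pi(\mu)\Psi)$ is in general of the form $c\,\pi(w_2)(g_0\otimes\overline{g_0})\pi(w_1)^*$ with $w_1\neq w_2$, i.e.\ a rank-one operator obtained by multiplying the projection by (possibly different) unitaries on each side rather than a genuine unitary conjugate, but its $\cS_1$-norm is still $1$, which is all your estimate uses.
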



\section{ $\cA$-localization operators}\label{sec:Aloc}

	Let \( \varphi_1, \varphi_2 \in \mathcal{S}(\mathbb{R}^d) \setminus \{0\} \) and \( a \in \mathcal{S}'(\mathbb{R}^{2d}) \). The \textit{localization operator} \( A_{a}^{\varphi_1,\varphi_2} \) is defined in \eqref{eq:loc}. 
	For \( f, g \in \mathcal{S}(\mathbb{R}^d) \), the operator can also be written in the weak form
	\[
	\langle A_{a}^{\varphi_1,\varphi_2} f, g \rangle = \langle a, \overline{V_{\varphi_1} f }\cdot V_{\varphi_2} g \rangle,
	\]
	where the duality extends the inner product on $L^2$.
	
	We recall Proposition 2.16 in \cite{BastianoniTeofanov}:
	\begin{proposition}
		Let \( \varphi_1, \varphi_2 \in \mathcal{S}(\mathbb{R}^d) \setminus \{0\} \), \( a \in \mathcal{S}'(\mathbb{R}^{2d}) \), and \( \tau \in [0,1].\) Then the localization operator  $ A_{a}^{\varphi_1,\varphi_2}$ coincides with the \(\tau\)-localization operator:
	\[
	A_{a}^{\varphi_1,\varphi_2} = A_{a,\tau}^{\varphi_1,\varphi_2},
	\]
	where
	\[
	A_{a,\tau}^{\varphi_1,\varphi_2} := \operatorname{Op}_{\tau}(a * W_\tau(\varphi_2, \varphi_1)).
	\]
	
	\end{proposition}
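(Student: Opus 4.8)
The plan is to verify the operator identity $A_a^{\varphi_1,\varphi_2}=A_{a,\tau}^{\varphi_1,\varphi_2}$ in the weak sense, i.e.\ after testing against $f,g\in\cS(\rd)$. On the left-hand side, the weak form recalled just above gives
\[
\langle A_a^{\varphi_1,\varphi_2}f,g\rangle=\langle a,\overline{V_{\varphi_1}f}\cdot V_{\varphi_2}g\rangle .
\]
On the right-hand side, by the defining relation \eqref{C21} of the $\tau$-pseudodifferential operator, $\langle A_{a,\tau}^{\varphi_1,\varphi_2}f,g\rangle=\langle\operatorname{Op}_\tau(\sigma)f,g\rangle=\langle\sigma,W_\tau(g,f)\rangle$ with $\sigma=a*W_\tau(\varphi_2,\varphi_1)$. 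Since $\varphi_1,\varphi_2\in\cS(\rd)$, the function $W_\tau(\varphi_2,\varphi_1)$ lies in $\cS(\rdd)$, so $a*W_\tau(\varphi_2,\varphi_1)$ is a well-defined tempered distribution and both brackets make sense. Thus the whole statement reduces to the distributional identity
\[
\langle a*W_\tau(\varphi_2,\varphi_1),W_\tau(g,f)\rangle=\langle a,\overline{V_{\varphi_1}f}\cdot V_{\varphi_2}g\rangle ,\qquad f,g\in\cS(\rd).
\]

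Next I would move the convolution off $a$ via the standard adjoint relation: for $a\in\cS'(\rdd)$ and $\Phi,\Psi\in\cS(\rdd)$ one has $\langle a*\Phi,\Psi\rangle=\langle a,\Psi*\Phi^{*}\rangle$ with $\Phi^{*}(v):=\overline{\Phi(-v)}$, and moreover $(\Psi*\Phi^{*})(w)=\langle\Psi,T_w\Phi\rangle$. Applying this with $\Phi=W_\tau(\varphi_2,\varphi_1)$ and $\Psi=W_\tau(g,f)$ (both Schwartz, so $\Psi*\Phi^{*}$ is again Schwartz and the pairing with $a$ is legitimate), the left-hand side above becomes $\big\langle a,\ w\mapsto\langle W_\tau(g,f),T_wW_\tau(\varphi_2,\varphi_1)\rangle\big\rangle$. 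Hence it suffices to establish the pointwise identity
\[
\langle W_\tau(g,f),\,T_wW_\tau(\varphi_2,\varphi_1)\rangle=\overline{V_{\varphi_1}f(w)}\,V_{\varphi_2}g(w),\qquad w\in\rdd .
\]

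For this last identity I would combine two ingredients. First, the covariance of the $\tau$-Wigner distribution, $W_\tau(\pi(w)\varphi_2,\pi(w)\varphi_1)=T_wW_\tau(\varphi_2,\varphi_1)$, rewrites the left-hand side as $\langle W_\tau(g,f),W_\tau(\pi(w)\varphi_2,\pi(w)\varphi_1)\rangle$. Second, the Moyal-type orthogonality relation \eqref{Moyal}, valid because $W_\tau$ is a metaplectic Wigner distribution and metaplectic operators are unitary, gives $\langle W_\tau(g,f),W_\tau(\pi(w)\varphi_2,\pi(w)\varphi_1)\rangle=\langle g,\pi(w)\varphi_2\rangle\,\overline{\langle f,\pi(w)\varphi_1\rangle}=V_{\varphi_2}g(w)\,\overline{V_{\varphi_1}f(w)}$, which is precisely what is needed. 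Chaining the equalities yields $\langle A_a^{\varphi_1,\varphi_2}f,g\rangle=\langle A_{a,\tau}^{\varphi_1,\varphi_2}f,g\rangle$ for all $f,g\in\cS(\rd)$, and since both operators map $\cS(\rd)$ continuously into $\cS'(\rd)$ this gives $A_a^{\varphi_1,\varphi_2}=A_{a,\tau}^{\varphi_1,\varphi_2}$.

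The points requiring care — rather than a genuine obstacle — are the bookkeeping of complex conjugates in the conjugate-linear pairing $\langle\cdot,\cdot\rangle$ and the precise form of the convolution–adjoint relation (most cleanly checked first for $a\in\cS(\rdd)$ by Fubini and then extended by density and duality), together with the observation that $W_\tau(g,f),W_\tau(\varphi_2,\varphi_1)\in\cS(\rdd)$ so that every pairing above is against a genuine Schwartz function. I note that the argument uses only the covariance of $W_\tau$ and the Moyal identity \eqref{Moyal}, so the hypothesis $\tau\in[0,1]$ plays no essential role and the statement persists for all $\tau\in\bR$, as already remarked in the paper.
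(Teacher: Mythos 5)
Your argument is correct: reducing to the weak form, transferring the convolution onto the test function via $\langle a*\Phi,\Psi\rangle=\langle a,\Psi*\Phi^*\rangle$ and $(\Psi*\Phi^*)(w)=\langle\Psi,T_w\Phi\rangle$, and then evaluating $\langle W_\tau(g,f),T_wW_\tau(\varphi_2,\varphi_1)\rangle$ by covariance plus the Moyal identity \eqref{Moyal} gives exactly $\overline{V_{\varphi_1}f(w)}\,V_{\varphi_2}g(w)$, which is the weak form of $A_a^{\varphi_1,\varphi_2}$. The paper does not reprove this proposition (it is quoted from Bastianoni--Teofanov), but its proof of the generalization --- the sufficient direction of Theorem~\ref{main} --- uses the same two key ingredients (covariance and Moyal orthogonality), packaged differently: it takes the Weyl representation \eqref{eq:loc-Weyl} as an external input and then passes from $W$ to $W_{\cA}$ via the convolution identity of Lemma~\ref{lemmacovariantconvolution}, namely $W_{\cA}(g,f)*W_{\cA}(\varphi_2,\varphi_1)^*=W(g,f)*W(\varphi_2,\varphi_1)^*$. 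Your route is more self-contained: you never invoke \eqref{eq:loc-Weyl}, and indeed your pointwise identity $(W_\tau(g,f)*W_\tau(\varphi_2,\varphi_1)^*)(w)=\overline{V_{\varphi_1}f(w)}\,V_{\varphi_2}g(w)$ proves the Weyl case $\tau=1/2$ as a special instance, whereas the paper's argument buys the ability to compare two different covariant distributions without ever computing the STFT-side expression explicitly. Your closing observation that only covariance and Moyal are used, so that $\tau\in[0,1]$ can be relaxed to $\tau\in\bR$ (and in fact to any covariant $W_{\cA}$), matches the paper's subsequent remark and Theorem~\ref{main}.
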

    \begin{remark}
      We observe that $$  \operatorname{Op}_{\tau}(a * W_\tau(\varphi_2, \varphi_1)):={Op}_{\cA_\tau}(a \ast W_{\cA_\tau}(\varphi_2, \varphi_1)),$$
      where  the symplectic matrix $\cA_\tau$ is defined in \eqref{Atau}. Furthermore,  the result above holds for every $\tau\in\bR$.
    \end{remark}
	$\tau$-Wigner distributions are particular cases of covariant metaplectic Wigner distributions for $\tau\in\bR$. We state the following lemma which allows us to generalize the previous result.
\begin{lemma}\label{lemmacovariantconvolution}
    Let $W_{\A}$ be a covariant metaplectic Wigner distribution with projection $\A\in Sp(2d,\bR)$, then:\begin{equation}\label{covariantconvol}
        W_{\A}(f_1,g_1)\ast W_{\A}(f_2,g_2)^*=W(f_1,g_1)\ast W(f_2,g_2)^*,
    \end{equation}
   for every $f_i,g_i\in L^2(\rd),\ i=1,2$, where we set $f^*(t):=\overline{f}(-t)$.
\end{lemma}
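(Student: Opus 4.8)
The plan is to reduce the identity to the known Fourier-analytic behavior of (cross-)Wigner-type distributions under convolution, exploiting that covariance is precisely what forces a metaplectic Wigner distribution to behave like the genuine Wigner distribution at the level of the convolutions appearing here. First I would recall (or derive from Lemma \ref{intertFormula} and Proposition \ref{propcovariant}) the explicit action of a covariant $W_\cA$ on time-frequency shifts: since $\cA$ has the block form \eqref{eq:covariant}, a direct computation gives $W_\cA(\pi(w)f,\pi(w)g)=T_w W_\cA(f,g)$ by hypothesis, and more importantly one can compute the ``phase-space marginals'' or, equivalently, the symplectic Fourier transform $\cF_\sigma W_\cA(f,g)$. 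The key structural fact I would isolate is that for covariant $W_\cA$ the ambiguity-type function $\cF_\sigma W_\cA(f,g)(z)$ coincides, up to a unimodular phase factor depending only on $z$, with $\cF_\sigma W(f,g)(z)$; this is where Proposition \ref{propcovariant} enters, because the bottom two block-rows of \eqref{eq:covariant} are exactly those of $A_{1/2}$ (the Wigner matrix), which pins down the variables dual to the convolution.

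Next I would pass to the symplectic Fourier transform side, where convolution becomes pointwise multiplication: writing $\widehat{h}$ for $\cF_\sigma h$, the claimed identity \eqref{covariantconvol} is equivalent to
\[
\widehat{W_\cA(f_1,g_1)}\cdot\widehat{W_\cA(f_2,g_2)^*}=\widehat{W(f_1,g_1)}\cdot\widehat{W(f_2,g_2)^*},
\]
and here I would use that $h\mapsto h^*$ with $h^*(t)=\overline{h}(-t)$ intertwines the Wigner-type calculus in a controlled way: $\cF_\sigma(W_\cA(f_2,g_2)^*)(z)$ equals the complex conjugate of $\cF_\sigma W_\cA(f_2,g_2)$ evaluated at $-z$ (or $z$, depending on conventions). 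Since the discrepancy between $W_\cA$ and $W$ is, by the previous paragraph, a unimodular factor $e^{2\pi i \Phi_\cA(z)}$ with $\Phi_\cA$ real, the two factors in the product pick up $e^{2\pi i\Phi_\cA(z)}$ and $e^{-2\pi i\Phi_\cA(z)}$ respectively (because one is conjugated and reflected), so the phases cancel and the two products agree pointwise; inverting $\cF_\sigma$ yields \eqref{covariantconvol}. Throughout, continuity (Proposition \ref{prop25}) lets me first prove the statement for $f_i,g_i\in\cS(\rd)$ and then extend to $L^2(\rd)$ by density.

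The main obstacle I expect is the bookkeeping of the phase factor: one must show not merely that $W_\cA$ and $W$ differ by \emph{some} metaplectic operator, but that on the symplectic-Fourier side the difference is a pure (real-linear or quadratic) phase in the \emph{single} variable $z$ over which the convolution is taken, and that this phase is \emph{odd} (or behaves correctly under the reflection $t\mapsto -t$ built into $(\cdot)^*$) so that it cancels in the product. Concretely this amounts to reading off from \eqref{eq:covariant} that the relevant $2d\times 2d$ sub-block acting on the convolution variable is symplectic and shares with $A_{1/2}$ the rows that matter, so that $\widehat{\cA}\,\widehat{\cA_{1/2}}^{-1}$ acts on $W(f_i,g_i)$ as multiplication by a chirp $e^{\pi i\langle L z,z\rangle}$ for a symmetric matrix $L$ determined by $A_{11},A_{13},A_{21}$; the chirp is even in $z$, but the reflection in $(\cdot)^*$ together with conjugation flips its sign, giving the cancellation. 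An alternative, cleaner route that avoids explicit chirps: note that both sides of \eqref{covariantconvol}, as sesquilinear expressions, are determined by their values on $f_i=g_i=\pi(w_i)\gamma$ via Lemma \ref{intertFormula}, and for such inputs the covariance identity \eqref{eq:covarianceid} reduces everything to translates of $W_\cA(\gamma,\gamma)\ast W_\cA(\gamma,\gamma)^*$, at which point one invokes \eqref{eq:covarianceid} once more to replace $W_\cA(\gamma,\gamma)$ by $T_{(\cdot)}$-covariant data whose convolution is translation-covariant and hence forced to agree with the Wigner case; I would present whichever of these two arguments turns out shorter after checking the conventions.
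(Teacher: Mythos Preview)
Your approach via the (symplectic) Fourier transform and phase cancellation is correct, but it is genuinely different from---and more laborious than---the paper's argument. The paper does not touch the Fourier side at all: it simply observes that, by covariance of $W$, the convolution $W(f_1,g_1)\ast W(f_2,g_2)^*$ evaluated at $w$ equals the $L^2(\rdd)$ inner product $\langle W(f_1,g_1),W(\pi(w)f_2,\pi(w)g_2)\rangle$, applies Moyal's identity \eqref{Moyal} to rewrite this as $\langle f_1,\pi(w)f_2\rangle\overline{\langle g_1,\pi(w)g_2\rangle}$, and then runs Moyal and covariance backwards for $W_\cA$. The entire proof is four lines and uses nothing beyond \eqref{eq:covarianceid} and \eqref{Moyal}.

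Your route does work: covariance of both $W$ and $W_\cA$ forces $\hat\cA\hat\cA_{1/2}^{-1}$ to commute with every translation $T_z$ on $L^2(\rdd)$, hence it is a Fourier multiplier with unimodular symbol $m$, and then $\widehat{W_\cA(f_2,g_2)^*}=\overline{\widehat{W_\cA(f_2,g_2)}}=\bar m\,\widehat{W(f_2,g_2)^*}$ gives the cancellation $m\bar m=1$. What this buys you is an explicit structural description (the Cohen-class kernel picture $W_\cA=W\ast\Theta$ with $|\hat\Theta|=1$), which is informative but unnecessary here. The paper's Moyal argument is preferable for the lemma as stated because it avoids the bookkeeping you flagged as the main obstacle, requires no density argument (everything is already $L^2$), and makes transparent that the only ingredients are unitarity of $\hat\cA$ and the covariance identity. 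Your second sketched route via Lemma \ref{intertFormula} is too vague as written; the Moyal route subsumes it.
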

\begin{proof}
    For every $f_i,g_i\in L^2(\rd),\ i=1,2$, we observe that $W(f_i,g_i)\in L^2(\rdd)$ and compute the convolution of the cross Wigner distributions:\begin{align*}
        \big(W(f_1,g_1)\ast W(f_2,g_2)^*\big)(w)=&\int_{\rdd}W(f_1,g_1)(u)W(f_2,g_2)^*(w-u)du\\=&\int_{\rdd}W(f_1,g_1)(u)\overline{W(f_2,g_2)}(w-u)du\\
        =&\int_{\rdd}\!\!W(f_1,g_1)(u)\!\overline{W(\pi(w)f_2,\pi(w)g_2)}(u)du.
    \end{align*}
   The last equality follows from the covariance property for the Wigner distribution, see \eqref{eq:covarianceid} for $\cA=\cA_{1/2}$. Using  Moyal's identity for $W$ and $W_{\A}$,
    \begin{align*}
        \int_{\rdd}W(f_1,g_1)(u)\overline{W(\pi(w)f_2,\pi(w)g_2)}(u)du&=\la W(f_1,g_1),W(\pi(w)f_2,\pi(w) g_2)\ra_{L^2(\rdd)}\\
        &=\la f_1,\pi(w) f_2\ra_{L^2(\rd)}\overline {\la g_1,\pi(w) g_2\ra}_{L^2(\rd)}\\
        &=\la W_{\A}(f_1,g_1),W_{\A}(\pi(w)f_2,\pi(w) g_2)\ra_{L^2(\rdd)}.
    \end{align*}
  Since $W_{\A}$ is covariant we can write\begin{align*}
       \la W_{\A}(f_1,g_1),W_{\A}(\pi(w)f_2,\pi(w) g_2)\ra_{L^2(\rdd)}  =&\int_{\rdd}W_{\A}(f_1,g_1)(u)\overline{W_{\A}(\pi(w)f_2,\pi(w)g_2)}(u)du\\
      =&\int_{\rdd}W_{\cA}(f_1,g_1)(u)\overline{W_{\cA}(f_2,g_2)}(u-w)du\\
       =&\int_{\rdd}W_{\cA}(f_1,g_1)(u)W_{\cA}(f_2,g_2)^*(u-w)du\\
       =& \big(W_{\A}(f_1,g_1)\ast W_{\A}(f_2,g_2)^*\big)(w).
    \end{align*}
    This concludes the proof.
\end{proof}

The sufficient conditions in Theorem \ref{main} can be obtained as an easy consequence of Lemma \ref{lemmacovariantconvolution}.
 \begin{proof}[\bf Proof of the sufficient condition of Theorem \ref{main}]
 	  Assume $W_{\A}$ is covariant. For every $f,g\in \cS(\rd)$, we use the connection between localization and Weyl  operators \eqref{eq:loc-Weyl} and then its weak definition in \eqref{I7} to write
 	  \begin{align*}
      \la A_a^{\varphi_1,\varphi_2}f,g\ra=&\la \operatorname{Op}_w(a\ast W(\varphi_2,\varphi_1))f,g\ra =\la a\ast W(\varphi_2,\varphi_1), W(g,f)\ra\\
      =& \la a, W(g,f)\ast W(\varphi_2,\varphi_1)^*\ra.
  \end{align*}
 Since $W_{\A}$ is covariant we can apply Lemma \ref{lemmacovariantconvolution}:
  \begin{align*}
      \la a, W(g,f)\ast W(\varphi_2,\varphi_1)^*\ra
      =& \la a, W_{\A}(g,f)\ast W_{\A}(\varphi_2,\varphi_1)^*\ra\\
      =& \la a\ast W_{\A}(\varphi_2,\varphi_1),W_{\A}(g,f)\ra\\
      =& \la \operatorname{Op}_{\A}(a\ast W_{\A}(\varphi_2,\varphi_1))f,g \ra\\
      =& \la A_{a,\A}^{\varphi_1,\varphi_2}f,g\ra,
  \end{align*}
  where in the last-but-one row we applied the definition of metaplectic pseudodifferential operator in \eqref{eq:metapseudo} and in the last line of $\cA$-localization operator in \eqref{AlocopElena}.\end{proof}

 \begin{proof}[\bf Proof of the vice versa of Theorem \ref{main}]
 By exploiting Lemma \ref{lemmaComm} we rewrite the condition \eqref{eq:a-loc} as:\begin{equation}\label{eqalocnew}
     a\ast W_{\cA}(\varphi_2,\varphi_1)=\hat \cA\hat \cA_{1/2}^{-1}(a\ast W(\varphi_2,\varphi_1)),\quad \forall \varphi_1,\varphi_2\in\cS(\rd),\ \forall a\in \cS'(\rdd).
 \end{equation}
Our goal is to prove that:\begin{equation}\label{covariantAWig}
    W_{\cA}(\pi(z)f,\pi(z)g)=T_zW_{\cA}(f,g),\quad \forall f,g\in L^2(\rd),\ z\in\rdd.
\end{equation}
We recall that:\begin{equation*}
    T_x\varphi=\delta_x\ast \varphi,\quad \forall \varphi\in\cS(\rd),\forall x\in\rd.
\end{equation*}
Let $f,g\in\cS(\rd)$ and $z\in\rdd$,   using the covariance property for the Wigner distribution: $T_zW(f,g)=W(\pi(z)f,\pi(z)g)$, we obtain
\begin{align*}
    T_zW_{\cA}(f,g)=&\delta_z\ast W_{\cA}(f,g)  = \hat \cA \hat \cA_{1/2}^{-1}(\delta_z\ast W(f,g))\\
    =& \hat \cA \hat \cA_{1/2}^{-1}(T_zW(f,g))
    =\hat \cA\hat \cA^{-1}_{1/2}(W(\pi(z)f,\pi(z)g))\\
    =& \hat \cA \hat \cA_{1/2}^{-1}\hat \cA_{1/2}(\pi(z)f\otimes \overline{\pi(z)g})
    = \hat \cA(\pi(z)f\otimes \overline{\pi(z)g})\\
    =& W_{\cA}(\pi(z)f,\pi(z)g)).
\end{align*}
In conclusion, the identity \eqref{covariantAWig} is obtained by the density of $\cS(\rd)$ in $L^2(\rd)$.
 \end{proof}

From the vice versa of Theorem \ref{main} it is immediate to get the vice versa of Lemma \ref{lemmacovariantconvolution}.

\begin{lemma}\label{viceversa3.3}
    Let $\cA\in Sp(2d,\bR)$ such that \eqref{covariantconvol} holds for every $f_i,g_i\in L^2(\rd),$ $i=1,2$. Then $W_{\cA}$ is covariant.
\end{lemma}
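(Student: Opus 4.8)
The plan is to reverse-engineer the argument used in the proof of Lemma~\ref{lemmacovariantconvolution}: there, covariance of $W_{\cA}$ was the hypothesis and the convolution identity \eqref{covariantconvol} the conclusion. Here we are handed \eqref{covariantconvol} and must recover covariance, so the natural route is to feed \eqref{covariantconvol} into the sufficiency computation of Theorem~\ref{main} to produce the representation formula \eqref{eq:a-loc}, and then invoke the already-proved converse direction of Theorem~\ref{main} to conclude that $W_{\cA}$ is covariant. In other words, I would show that \eqref{covariantconvol} $\Rightarrow$ \eqref{eq:a-loc} $\Rightarrow$ covariance, the second implication being exactly the ``vice versa'' of Theorem~\ref{main} just established.

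Concretely, first I would fix $\varphi_1,\varphi_2\in\cS(\rd)\setminus\{0\}$ and $a\in\cS'(\rdd)$, and for test functions $f,g\in\cS(\rd)$ expand $\la A_a^{\varphi_1,\varphi_2}f,g\ra$ exactly as in the proof of the sufficient condition of Theorem~\ref{main}: using \eqref{eq:loc-Weyl} and \eqref{I7},
\[
\la A_a^{\varphi_1,\varphi_2}f,g\ra
=\la a, W(g,f)\ast W(\varphi_2,\varphi_1)^*\ra .
\]
Next, instead of applying Lemma~\ref{lemmacovariantconvolution} (which is unavailable, since covariance is not assumed), I would apply the hypothesis \eqref{covariantconvol} directly with $f_1=g$, $g_1=f$, $f_2=\varphi_2$, $g_2=\varphi_1$, which gives
\[
W(g,f)\ast W(\varphi_2,\varphi_1)^*=W_{\cA}(g,f)\ast W_{\cA}(\varphi_2,\varphi_1)^* .
\]
Substituting and then reversing the convolution adjoint and applying \eqref{eq:metapseudo} and \eqref{AlocopElena} exactly as in the earlier proof yields $\la A_a^{\varphi_1,\varphi_2}f,g\ra=\la A_{a,\cA}^{\varphi_1,\varphi_2}f,g\ra$ for all $f,g\in\cS(\rd)$, hence the operator identity \eqref{eq:a-loc} for every admissible $a,\varphi_1,\varphi_2$. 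At that point the hypothesis of the converse part of Theorem~\ref{main} is met, and covariance of $W_{\cA}$ follows immediately.

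I do not expect a genuine obstacle here: the only points requiring a line of care are (i) checking that the duality brackets and the convolution-adjoint manipulations $\la a, u\ast v^*\ra=\la a\ast v, u\ra$ are legitimate in the $\cS'$--$\cS$ pairing (which is already implicitly used in the sufficiency proof, since $W(\varphi_2,\varphi_1),W_{\cA}(\varphi_2,\varphi_1)\in\cS(\rdd)$ by Proposition~\ref{prop25}), and (ii) noting that \eqref{covariantconvol} is assumed for all $f_i,g_i\in L^2(\rd)$, in particular for Schwartz functions, so the substitution step is valid. The proof is therefore essentially a two-line citation chain, and I would keep the write-up correspondingly short, simply stating that \eqref{covariantconvol} plugs into the sufficiency computation to give \eqref{eq:a-loc}, whence the converse of Theorem~\ref{main} applies.
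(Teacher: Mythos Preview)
Your proposal is correct and follows essentially the same approach as the paper: use the hypothesis \eqref{covariantconvol} to show $A_{a,\cA}^{\varphi_1,\varphi_2}=A_a^{\varphi_1,\varphi_2}$ for all $a,\varphi_1,\varphi_2$, and then invoke the converse part of Theorem~\ref{main}. The paper's write-up is just slightly more compressed than yours, but the logical chain is identical.
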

\begin{proof}
   Fix an arbitrary symbol $a\in\cS'(\rdd)$ and arbitrary pair of windows $\varphi_1,\varphi_2\in\cS(\rd)$. By \eqref{covariantconvol},\begin{equation*}
        \la a, W_{\cA}(g,f)\ast W_{\cA}(\varphi_2,\varphi_1)^*\ra= \la a, W(g,f)\ast W(\varphi_2,\varphi_1)^*\ra,\quad \forall f,g\in \cS(\rd).
    \end{equation*}
    Hence, \begin{equation*}
        A_{a,\cA}^{\varphi_1,\varphi_2}=A_a^{\varphi_1,\varphi_2},
    \end{equation*}
    and this is true for every choice of $a,\varphi_1,\varphi_2$. Hence, by Theorem \ref{main} we conclude the thesis.
\end{proof}

 \begin{remark}
     The vice versa of Theorem \ref{main} ensures that for every non-covariant metaplectic Wigner distribution there exist $a\in\cS'(\rdd)$ and $\varphi_1,\varphi_2\in\cS(\rd)$ such that \begin{equation*}
         A_{a,\cA}^{\varphi_1,\varphi_2}\ne A_{a}^{\varphi_1,\varphi_2}.
     \end{equation*}
     However, if the symbol $a$ is the Dirac delta distribution centered at the origin, the identity  \eqref{eq:a-loc} is satisfied for every choice of $\varphi_1,\varphi_2\in\cS(\rd)$ and $\cA\in Sp(2d,\bR)$, indeed, $\forall f,g\in\cS(\rd)$, \begin{align*}
         \la  A_{\delta,\cA}^{\varphi_1,\varphi_2}f,g\ra&=\la Op_{\cA}(\delta\ast W_{\cA}(\varphi_2,\varphi_1))f,g\ra=\la Op_{\cA}( W_{\cA}(\varphi_2,\varphi_1))f,g\ra\\
         &=\la W_{\cA}(\varphi_2,\varphi_1)),W_{\cA}(g,f)\ra=\la W(\varphi_2,\varphi_1)),W(g,f)\ra\\
         &=\la Op_w(W(\varphi_2,\varphi_1))f,g\ra=\la A_{\delta}^{\varphi_1,\varphi_2}f,g\ra,
     \end{align*}
         as desired.
 \end{remark}

\subsection{Counterexamples for non-covariant matrices $\cA$}
    We exhibit here explicit calculations of convolutions of non-covariant distributions, underlying again that the identity \eqref{covariantconvol} does not hold in this case.

\begin{example}\label{counter1}
    Let $\A=I\in Sp(2d,\bR)$.  The identity $I$ is not covariant since it does not satisfy the block decomposition in \eqref{eq:covariant}.  Consider the Gaussian\begin{equation}\label{eq:phi}
    	 \phi(t):=e^{-\pi t^2},\quad t\in\rd. 
    \end{equation}
    Then, for $f=g=\varphi_1=\varphi_2=\phi$,
    \begin{equation}\label{eq:counteres1}
    	W_{I}\phi\ast W_I\phi^*\not= W\phi\ast W\phi^*.
    \end{equation}

\end{example}
  \begin{proof} we compute:\begin{align*}
       ( W_{I}\phi\ast W_I\phi^*)(t,s)=&\int_{\rdd}\phi(x)\phi(\xi)\overline{\phi}(x-t)\overline{\phi}(\xi -s)dxd\xi\\
        =& \int_{\rdd} e^{-\pi x^2}e^{-\pi \xi^2} e^{-\pi (x-t)^2}e^{-\pi (\xi-s)^2}dxd\xi\\
        =& \int_{\rd}e^{-\pi (x^2-(x-t)^2)}dx\int_{\rd} e^{-\pi (\xi^2-(\xi-s)^2)}d\xi.
    \end{align*}
    Now,\begin{align*}
        \int_{\rd}e^{-\pi (x^2-(x-t)^2)}dx= \int_{\rd} e^{-\pi (2x^2+t^2-2xt)}dx  =e^{-\pi \frac{t^2}{2}}\int_{\rd} e^{-\pi (\sqrt{2}x-\frac{t}{\sqrt{2}})^2}dx
        = 2^{-\frac{d}{2}}e^{-\frac{\pi t^2}{2}}.
    \end{align*}
    As a result, \begin{equation}\label{eq:ris1}
        ( W_{I}\phi\ast W_I\phi^*)(t,s)=2^{-d} e^{-\frac{\pi(t^2+s^2)}{2}}.
    \end{equation}
 An easy computation gives the  Wigner distribution of the Gaussian $\phi$:\begin{equation*}
     W\phi(x,\xi)= 2^{\frac{d}{2}} e^{-2\pi (x^2+\xi^2)}.
 \end{equation*}
The convolution $W\phi\ast W\phi^*=W\phi\ast W\phi$ is given by \begin{align}
    (W\phi\ast W\phi^*)(t,s)=&2^d\int_{\rdd} e^{-2\pi (x^2+\xi^2)}e^{-2\pi((x-t)^2+(\xi-s)^2)}dxd\xi\notag\\
    =& 2^d e^{-2\pi (t^2+s^2)}\int_{\rdd}e^{-\pi (4x^2+4\xi^2 -2xt-2\xi s)}dxd\xi\notag\\=& 2^d e^{-2\pi (t^2+s^2)}\int_{\rd} e^{-\pi(4x^2-4xt)}dx\int_{\rd}e^{-\pi(4\xi^2-4\xi s)}d\xi\notag\\
    =& 2^{-d} e^{-2\pi (t^2+s^2)} e^{\pi (t^2+s^2)}\notag\\
    =& 2^{-d}e^{-\pi(t^2+s^2)}\label{eq:01}.
\end{align}
Since \eqref{eq:ris1}$\not=\eqref{eq:01}$  we obtain the claim.
\end{proof}

\begin{example}\label{counterex2}
 Consider  $\A_{ST}\in Sp(2d,\bR)$ in \eqref{AST}. The related metaplectic Wigner distribution $W_{\A_{ST}}$ is the STFT, see \eqref{eq:stft-metap}. As in the previous example, we choose  $f=g=\varphi_1=\varphi_2=\phi$, with $\phi$ defined in \eqref{eq:phi}.
    Then \begin{equation}\label{eq:ex2}
    	 V_{\phi}\phi\ast V_{\phi}\phi^*\not=W\phi\ast W\phi^*.
    \end{equation}
    
    \end{example}
 \begin{proof} 
    An easy computation (see, e.g., \cite{book}) shows:
    \begin{equation*}
        V_{\phi}\phi(x,\xi)=2^{-\frac{d}{2}}e^{-\frac{\pi}{2}(x^2+\xi^2)}e^{-\pi ix\xi}.
    \end{equation*}
    Now we compute $V_{\phi}\phi\ast V_{\phi}\phi^*$:
    \begin{align*}
        (V_{\phi}\phi\ast V_{\phi}\phi^*)(t,s)=& \int_{\rdd} 2^{-d}e^{-\frac{\pi}{2}(x^2+\xi^2)}e^{-\pi i x\xi} e^{-\frac{\pi}{2}((x-t)^2+(\xi-s)^2)}e^{\pi i (x-t)(\xi-s)}dxd\xi\\ 
        =& 2^{-d} e^{-\frac{\pi}{2}(t^2+s^2)} e^{\pi i ts}\int_{\rdd} e ^{-\pi (x^2+\xi^2-xt-\xi s)} e ^{-\pi i t\xi} e ^{-\pi i xs}dx d\xi \\
        =& 2^{-d}  e^{-\frac{\pi}{2}(t^2+s^2)} e^{\pi i ts}\int_{\rd} e ^{-\pi (x^2-xt)} e ^{-\pi i xs}dx \int_{\rd}  e ^{-\pi (\xi^2-\xi s)} e ^{-\pi i \xi t}d\xi \\ 
        =& 2^{-d}  e^{-\frac{\pi}{2}(t^2+s^2)} e^{\pi i ts} e^{\frac{\pi}{4}(t^2+s^2)}\int_{\rd} e ^{-\pi (x-\frac{t}{2})^2} e ^{-2\pi i x\frac{s}{2}}dx \\
        &\qquad\qquad\times\quad \int_{\rd}e ^{-\pi (\xi-\frac{s}{2})^2} e ^{-2\pi i \xi(\frac{t}{2})}d\xi \\
        =& 2^{-d} e^{-\frac{\pi}{4}(t^2+s^2)} e^{\pi i ts} \cF (T_{t/2}\phi)(s/2)\cF (T_{s/2}\phi)(t/2)\\
        =& 2^{-d} e^{-\frac{\pi}{4}(t^2+s^2)} e^{\pi i ts} M_{-t/2}\hat \phi(s/2) M_{-s/2}\hat \phi (t/2)\\
        =& 2^{-d} e^{-\frac{\pi}{2}(t^2+s^2)}.
    \end{align*}
    The obtained expression is clearly different from \eqref{eq:01}.
    \end{proof}

\begin{remark}
    From the two previous counterexamples we can easily build an explicit counterexample showing that the equality \eqref{eq:a-loc} is  false if $W_{\cA}$ is not covariant. For instance, one can consider an $\cA$-localization operators related to any of the metaplectic Wigner distributions in the previous examples, Gaussian windows $\phi$ and the symbol $a=\delta_{z_0}\in\cS'(\rdd)$, with $z_0\in\rdd$  a point where the convolution products \eqref{eq:counteres1} or \eqref{eq:ex2}  are different. 
\end{remark}

Examples \ref{counter1} and \ref{counterex2} highlight what we already expect from Lemma \ref{viceversa3.3}. The following example shows that, even if we change the symbol and the windows, it is not generally possible to write an arbitrary $\cA$-localization operator in the classical form.

\begin{example}\label{ex3}
	 Consider  $J\in Sp(2d,\bR)$ so that $\hat{J}=\cF$,  the symbol $a\equiv 1\in \cS'(\rdd)$, and $\varphi_1=\varphi_2=\phi$, with $\phi$ defined in \eqref{eq:phi}.
	Then,  there exist no symbol $b\in\cS'(\rdd)$ and no pair of windows $\phi_1,\phi_2\in\cS(\rd)$ such that \begin{equation}\label{eq:locdiffsimbol}
		A_{a,\cA}^{\varphi_1,\varphi_2}=A_{b}^{\phi_1,\phi_2}. 
	\end{equation}
\end{example}
	\begin{proof}
By contradiction. Assume \eqref{eq:locdiffsimbol} holds true. 	Then the connection \eqref{AlocopElena} gives
\begin{equation*}
		\operatorname{Op}_{\cA}(a\ast W_{\cA}(\varphi_2, \varphi_1))=\operatorname{Op}_w(b\ast W(\phi_2,\phi_1)).
	\end{equation*}
	By Lemma \ref{lemmaComm}, this is equivalent to asking \begin{equation*}
		b\ast W(\phi_2,\phi_1)=\hat \cA_{1/2}\hat \cA^{-1}(a\ast W_{\cA}(\varphi_2,\varphi_1)).
	\end{equation*}
	The term $b\ast W(\phi_2,\phi_1)$ is a convolution between a tempered distribution and a Schwartz function, so it is a regular distribution associated to a slowly increasing, $\cC^{\i}$ function on $\rdd$. Analyzing the right-hand side we get: \begin{align*}
		\hat \cA_{1/2}\hat \cA^{-1}(a\ast W_{\cA}(\varphi_2,\varphi_1))&=\hat \cA_{1/2} \cF^{-1}(a\ast \cF(\varphi_2\otimes\overline{\varphi_1}))\\
		&= \hat \cA_{1/2}(\cF^{-1}(a)\cdot (\varphi_2\otimes\overline{\varphi_1})).
	\end{align*}
	Now, $\cF^{-1}(a)=\cF^{-1}(1)=\delta_{2d}$, which is the Dirac delta distribution on $\rdd$. Since $\varphi_1,\varphi_2$ are standard gaussians, we have that $\overline{(\varphi_2\otimes\overline{\varphi_1})(0,0)}=1$, so, $\delta_{2d}\cdot (\varphi_2\otimes\overline{\varphi_1})=\delta_{2d}=\delta_d\otimes \overline{\delta_d}$. Therefore,
	\begin{equation*}
		\hat \cA_{1/2}(\cF^{-1}(a)\cdot (\varphi_2\otimes\overline{\varphi_1}))=\hat \cA_{1/2}(\delta_d\otimes \overline{\delta_d})=W(\delta_d)= \delta_d\otimes 1,
	\end{equation*}
	which is a contradiction, since $\delta_d\otimes 1$ is not a regular distribution. 
	\end{proof}
	\subsection{ Schwartz kernel of $A_{a,\cA}^{\varphi_1,\varphi_2}$ }
In what follows we compute the Schwartz kernel of the $\cA$-localization operators.
\begin{proposition}\label{propkernelAloc}
    Let $\cA\in Sp(2d,\bR)$, $\varphi_1,\varphi_2\in\cS(\rd)$ and $a\in\cS'(\rdd)$, then the Schwartz kernel $k$ of $A_{a,\cA}^{\varphi_1,\varphi_2}$ is given by:\begin{equation}\label{kernelofAloc}
        k=\hat A^{-1}(a\ast W_{\cA}(\varphi_2,\varphi_1)).
    \end{equation}
\end{proposition}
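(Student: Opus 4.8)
The plan is to identify $A_{a,\cA}^{\varphi_1,\varphi_2}$ with a metaplectic pseudodifferential operator whose symbol is already known, and then invoke the kernel formula of Corollary~\ref{cor:kernel}. By the very definition~\eqref{AlocopElena}, $A_{a,\cA}^{\varphi_1,\varphi_2}=\operatorname{Op}_\cA\big(a*W_\cA(\varphi_2,\varphi_1)\big)$, so it suffices to apply Corollary~\ref{cor:kernel} with the symbol $\sigma:=a*W_\cA(\varphi_2,\varphi_1)\in\cS'(\rdd)$. That corollary states that $\langle\operatorname{Op}_\cA(\sigma)f,g\rangle=\langle k_\cA(\sigma),g\otimes\bar f\rangle$ with $k_\cA(\sigma)=\hat\cA^{-1}\sigma$, which is exactly~\eqref{kernelofAloc} once we substitute $\sigma$.

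First I would check that the object $a*W_\cA(\varphi_2,\varphi_1)$ really is a well-defined tempered distribution on $\rdd$: since $\varphi_1,\varphi_2\in\cS(\rd)$, Proposition~\ref{prop25} gives $W_\cA(\varphi_2,\varphi_1)\in\cS(\rdd)$, so its convolution with $a\in\cS'(\rdd)$ is a (smooth, slowly increasing) tempered distribution, and the metaplectic operator $\hat\cA^{-1}$ maps $\cS'(\rdd)$ to itself. Hence the right-hand side of~\eqref{kernelofAloc} makes sense. Second, I would recall what ``Schwartz kernel'' means here: $k$ is the element of $\cS'(\rdd)$ such that $\langle A_{a,\cA}^{\varphi_1,\varphi_2}f,g\rangle=\langle k,g\otimes\bar f\rangle$ for all $f,g\in\cS(\rd)$ — this is precisely the pairing appearing in~\eqref{kernelMPO}. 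Third, I would simply write the chain
\[
\langle A_{a,\cA}^{\varphi_1,\varphi_2}f,g\rangle
=\langle\operatorname{Op}_\cA(\sigma)f,g\rangle
=\langle k_\cA(\sigma),g\otimes\bar f\rangle
=\langle \hat\cA^{-1}\sigma,g\otimes\bar f\rangle,
\]
and conclude $k=\hat\cA^{-1}\sigma=\hat\cA^{-1}(a*W_\cA(\varphi_2,\varphi_1))$ by uniqueness of the kernel (density of $\mathrm{span}\{g\otimes\bar f:f,g\in\cS(\rd)\}$ in $\cS'(\rdd)$, already noted in the Preliminaries).

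There is no real obstacle: the statement is essentially a restatement of Corollary~\ref{cor:kernel} with a specific symbol, so the proof is a one-line substitution plus a sanity check that the symbol lies in $\cS'(\rdd)$. The only mild subtlety worth spelling out is the well-definedness of the convolution $a*W_\cA(\varphi_2,\varphi_1)$ and the fact that $\operatorname{Op}_\cA$ is defined on all of $\cS'(\rdd)$ via Definition~\ref{defMetaplPsiDo}, which in turn relies on the $\cS$–$\cS'$ continuity of $W_\cA$ from Proposition~\ref{prop25}; once that is acknowledged, the identification with the kernel is immediate. I would therefore keep the write-up to a few lines, citing Corollary~\ref{cor:kernel} as the engine.
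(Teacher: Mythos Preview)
Your proposal is correct and follows exactly the same approach as the paper: invoke the definition~\eqref{AlocopElena} to write $A_{a,\cA}^{\varphi_1,\varphi_2}=\operatorname{Op}_\cA(a*W_\cA(\varphi_2,\varphi_1))$ and then apply Corollary~\ref{cor:kernel}. The paper's proof is in fact even more terse than yours, omitting the well-definedness checks you spell out.
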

\begin{proof}
    By definition of $A_{a,\cA}^{\varphi_1,\varphi_2}$ we have\begin{equation*}
        A_{a,\cA}^{\varphi_1,\varphi_2}=\operatorname{Op}_{\cA}(a\ast W_{\cA}(\varphi_2,\varphi_2).
    \end{equation*}
    By applying Corollary \ref{cor:kernel} we get the thesis.
\end{proof}

 \subsubsection{Totally-Wigner decomposable $\cA\in Sp(2d,\bR)$} This class of metaplectic Wigner distributions was introduced in \cite[Definition 4.1]{CGR2022} and refers to symplectic matrices of the type
 \begin{equation}\label{SympDec}
 	\mathcal{A}=\mathcal{A}_{FT2}\mathcal{D}_E,
 \end{equation}
 where  $\mathcal{D}_E$ is defined in \eqref{defDLVC}.
 \begin{definition}\label{WigDecDef} We say that $\mathcal{A}\in Sp(2d,\mathbb{R})$ is a \textbf{totally Wigner-decomposable} (symplectic) matrix if (\ref{SympDec}) holds for some $E\in GL(2d,\mathbb{R})$. If $\cA$ is totally Wigner-decomposable, we say that $W_\cA$ is of the \textbf{classic type}.
 \end{definition}

 They have been largely studied in the literature \cite{Bayer, CorderoTrapasso, toft}, see the recent survey \cite{Giacchi}.
 
 In what follows we infer an explicit formula for the kernel of the related  $\cA$- localization operator.

 \begin{proposition}\label{propkerneltwd}
 Let  $\mathcal{A}\in Sp(2d,\mathbb{R})$  be  \textbf{totally Wigner-decomposable}. Suppose that $E$ and $E^{-1}$ have block decomposition:\begin{equation}
         E=\begin{pmatrix}A&B\\C&D\end{pmatrix},\quad E^{-1}=\begin{pmatrix}
             A'&B'\\ C'&D'
         \end{pmatrix}.
     \end{equation}
     Then, for $a\in\cS'(\rd)$, $\f_1,\f_2\in\cS(\rd)$, the Schwartz kernel $k$ of $\cA^{\varphi_1,\varphi_2}_{a,\cA}$ is given by:\begin{align}
         &k(x,y)=\notag\\&\int_{\rd}T_{(t,0)}\cF_2^{-1}a(E^{-1}(x,y))\varphi_2(At+B(C'x+ D' y)) \overline{\varphi_1(Ct+ D (C'x+D'y))}dt.\label{kerneltwd}
     \end{align}
     Where the integral is to be understood in the weak sense.
 \end{proposition}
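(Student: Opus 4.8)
The plan is to start from the kernel formula of Proposition~\ref{propkernelAloc}, namely $k=\hat\cA^{-1}\big(a\ast W_\cA(\varphi_2,\varphi_1)\big)$, and to make the right-hand side explicit using the decomposition $\cA=\cA_{FT2}\cD_E$. Since $\pi^{Mp}$ is a group homomorphism and $\pi^{Mp}(\cF_2)=\cA_{FT2}$, $\pi^{Mp}(\mathfrak{T}_E)=\cD_E$ by Example~\ref{es22}, the composition $\cF_2\mathfrak{T}_E$ projects onto $\cA$, so we may take the representative $\hat\cA=\cF_2\mathfrak{T}_E$ (the two choices of $\hat\cA$ differ by a sign which cancels in $\hat\cA^{-1}(a\ast\hat\cA(\,\cdot\,))$, so $k$ is unaffected). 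Hence $\hat\cA^{-1}=\mathfrak{T}_{E^{-1}}\cF_2^{-1}$ and $W_\cA(\varphi_2,\varphi_1)=\cF_2 H$ with $H:=\mathfrak{T}_E(\varphi_2\otimes\overline{\varphi_1})\in\cS(\rdd)$; writing $E$ in blocks as in the statement,
\[
H(u,v)=|\det E|^{1/2}\,\varphi_2(Au+Bv)\,\overline{\varphi_1(Cu+Dv)},\qquad u,v\in\rd .
\]

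The second ingredient is a convolution identity linking the full convolution on $\rdd$ with the partial Fourier transform $\cF_2$: for $a\in\cS'(\rdd)$ and $\psi\in\cS(\rdd)$,
\[
\cF_2^{-1}\big(a\ast\cF_2\psi\big)(u,v)=\int_{\rd}(\cF_2^{-1}a)(u',v)\,\psi(u-u',v)\,du' ,
\]
i.e. $\cF_2^{-1}(a\ast\cF_2\psi)$ is the convolution of $\cF_2^{-1}a$ with $\psi$ in the \emph{first} variable only. For $a\in\cS(\rdd)$ this follows by expanding all the integrals defining $\ast$, $\cF_2$ and $\cF_2^{-1}$ and applying Fubini (the integration in the dual of the second variable yields a Dirac delta that collapses that variable); the general case follows from the density of $\cS(\rdd)$ in $\cS'(\rdd)$ and the continuity of all maps involved, the identity then being understood in the weak sense — this is exactly the origin of the ``weak sense'' caveat in the statement. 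Taking $\psi=H$, we get $\cF_2^{-1}\big(a\ast W_\cA(\varphi_2,\varphi_1)\big)=G$ with $G(u,v)=\int_{\rd}(\cF_2^{-1}a)(u',v)\,H(u-u',v)\,du'$.

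It remains to apply $\mathfrak{T}_{E^{-1}}$. Since $\mathfrak{T}_{E^{-1}}F(x,y)=|\det E|^{-1/2}F(E^{-1}(x,y))$ and $E^{-1}(x,y)=(A'x+B'y,\,C'x+D'y)$, the factors $|\det E|^{\pm1/2}$ cancel and one obtains
\[
k(x,y)=\int_{\rd}(\cF_2^{-1}a)(u',C'x+D'y)\,\varphi_2\big(A(A'x+B'y-u')+B(C'x+D'y)\big)\,\overline{\varphi_1\big(C(A'x+B'y-u')+D(C'x+D'y)\big)}\,du'.
\]
The block relations contained in $EE^{-1}=I$ read $A(A'x+B'y)+B(C'x+D'y)=x$ and $C(A'x+B'y)+D(C'x+D'y)=y$, so that $x-A(A'x+B'y)=B(C'x+D'y)$ and $y-C(A'x+B'y)=D(C'x+D'y)$. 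Performing the substitution $t=(A'x+B'y)-u'$, under which $du'=dt$ and $(\cF_2^{-1}a)(u',C'x+D'y)=(T_{(t,0)}\cF_2^{-1}a)(E^{-1}(x,y))$, the arguments of $\varphi_2$ and $\varphi_1$ become $At+B(C'x+D'y)$ and $Ct+D(C'x+D'y)$ respectively, which is exactly \eqref{kerneltwd}.

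The only genuinely delicate step is the partial-convolution identity for a distributional $a$ and the consistency of the resulting weak interpretation of \eqref{kerneltwd}; this is made precise by pairing both sides with $g\otimes\overline f$, $f,g\in\cS(\rd)$, and checking that one recovers $\langle A_{a,\cA}^{\varphi_1,\varphi_2}f,g\rangle$. The remaining block-matrix manipulations are routine once $EE^{-1}=I$ is invoked.
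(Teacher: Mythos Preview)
Your proof is correct and follows essentially the same route as the paper: start from $k=\hat\cA^{-1}(a\ast W_\cA(\varphi_2,\varphi_1))$ with $\hat\cA=\cF_2\mathfrak{T}_E$, use the partial-Fourier convolution identity to reduce $\cF_2^{-1}(a\ast W_\cA(\varphi_2,\varphi_1))$ to an integral in the first variable, and then apply $\mathfrak{T}_{E^{-1}}$. The only cosmetic difference is that the paper places the translation on the $\cF_2^{-1}a$ factor from the start (writing $\int\cF_2^{-1}a(x-t,y)\cF_2^{-1}W_\cA(\varphi_2,\varphi_1)(t,y)\,dt$), so after composing with $E^{-1}$ the formula \eqref{kerneltwd} appears immediately, whereas you put the shift on $H$ and then undo this by the substitution $t=(A'x+B'y)-u'$; note that your invocation of the block relations from $EE^{-1}=I$ is in fact unnecessary once that substitution is made.
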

 \begin{proof}
 By Proposition \ref{propkernelAloc} we can write \begin{equation*}
     k=\hat \cA^{-1}(a\ast W_{\cA}(\varphi_2,\varphi_1))=\hat \cD_{E^{-1}}\cF_2^{-1}(a\ast W_{\cA}(\varphi_2,\varphi_1)).
 \end{equation*}
      We recall that \begin{equation*}
         \cF_2^{-1}(T\ast\phi)(x,y)=\int_{\rd}\cF_2^{-1}T(x-t,y)\cF_2^{-1}\phi(t,y)dt,\quad  T\in\cS'(\rdd), \phi\in\cS(\rdd).
     \end{equation*}
     Then \begin{align*}
     \cF_2^{-1}(a\ast W_{\cA}(\varphi_2,\varphi_1))(x,y)&=\int_{\rd}\cF_2^{-1}a(x-t,y)\cF_2^{-1}W_{\cA}(\varphi_2,\varphi_1)(t,y)dt\\
     =& \int_{\rd}\cF_2^{-1}a(x-t,y)\cF_2^{-1}\cF_2\hat \cD_E(\varphi_2\otimes \overline{\varphi_1})(t,y)dt\\
     =&|\det E|^{\frac{1}{2}}\int_{\rd}\cF_2^{-1}a(x-t,y)(\varphi_2\otimes \overline{\varphi_1})(E(t,y))dt\\
     =&|\det E|^{\frac{1}{2}}\int_{\rd}T_{(t,0)}\cF_2^{-1}a(x,y)\varphi_2(At+B y) \overline{\varphi_1(C t+ D y)}dt.
     \end{align*}
     So, \begin{align*}
          \hat\cD_{E^{-1}}&\cF_2^{-1}(a\ast W_{\cA}(\varphi_2,\varphi_1))(x,y)\\ =&\int_{\rd}T_{(t,0)}\cF_2^{-1}a(E^{-1}(x,y))\varphi_2(At+B(C'x+ D' y)) \overline{\varphi_1(Ct+ D (C'x+D'y))}dt.
     \end{align*}
    This is the desired expression.
 \end{proof}
\subsection{Continuity properties}
 In order to study the continuity properties of a $\cA$ localization operator $A_{a,\cA}^{\varphi_1,\varphi_2}$, it is useful to compute its Weyl symbol. 
 \begin{proposition}\label{propWeylAloc}
      Let $\cA\in Sp(2d,\bR)$, $\varphi_1,\varphi_2\in\cS(\rd)$ and $a\in\cS'(\rdd)$. The Weyl symbol of  $A_{a,\cA}^{\varphi_1,\varphi_2}$ is given by:\begin{equation}\label{WeylAloc}
         \sigma=\hat\cA_{1/2}\hat \cA^{-1}(a\ast W_{\cA}(\varphi_2,\varphi_1))\in \cS'(\rdd).
     \end{equation}
 \end{proposition}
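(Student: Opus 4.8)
The plan is to combine the definition of the $\cA$-localization operator~\eqref{AlocopElena} with the fact, observed after Corollary~\ref{cor:kernel}, that every $\cA$-pseudodifferential operator has a Weyl representation. Concretely, $A_{a,\cA}^{\varphi_1,\varphi_2}=\operatorname{Op}_\cA(a\ast W_\cA(\varphi_2,\varphi_1))$ by definition, so we must rewrite $\operatorname{Op}_\cA(b)$ with $b:=a\ast W_\cA(\varphi_2,\varphi_1)$ as a Weyl operator $\operatorname{Op}_w(\sigma)$ and read off $\sigma$.

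The key step is a direct application of Lemma~\ref{lemmaComm} with the roles $\cA\mapsto\cA$ (the given matrix) and $\cB\mapsto\cA_{1/2}$ (the Weyl matrix). The lemma states $\operatorname{Op}_\cA(b)=\operatorname{Op}_{\cA_{1/2}}(\sigma)$ if and only if $\sigma=\widehat{\cA_{1/2}}\,\widehat{\cA}^{-1}(b)$. Since $\operatorname{Op}_{\cA_{1/2}}=\operatorname{Op}_w$ (the Weyl quantization, as recalled in the Example following Definition~\ref{defMetaplPsiDo}), this immediately yields
\[
A_{a,\cA}^{\varphi_1,\varphi_2}=\operatorname{Op}_\cA\big(a\ast W_\cA(\varphi_2,\varphi_1)\big)=\operatorname{Op}_w\big(\widehat{\cA_{1/2}}\,\widehat{\cA}^{-1}(a\ast W_\cA(\varphi_2,\varphi_1))\big),
\]
so that $\sigma=\widehat{\cA_{1/2}}\,\widehat{\cA}^{-1}(a\ast W_\cA(\varphi_2,\varphi_1))$, which is exactly~\eqref{WeylAloc}. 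It remains only to note that $\sigma\in\cS'(\rdd)$: indeed $W_\cA(\varphi_2,\varphi_1)\in\cS(\rdd)$ by Proposition~\ref{prop25}, hence $a\ast W_\cA(\varphi_2,\varphi_1)\in\cS'(\rdd)$, and metaplectic operators restrict to automorphisms of $\cS'(\rdd)$, so applying $\widehat{\cA_{1/2}}\,\widehat{\cA}^{-1}$ keeps us in $\cS'(\rdd)$.

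I do not anticipate any genuine obstacle here: the statement is essentially a bookkeeping corollary of Lemma~\ref{lemmaComm} together with the identification of $\operatorname{Op}_{\cA_{1/2}}$ with $\operatorname{Op}_w$. The only point requiring a word of care is making sure the convolution $a\ast W_\cA(\varphi_2,\varphi_1)$ is well-defined as a tempered distribution, which follows from $W_\cA(\varphi_2,\varphi_1)\in\cS(\rdd)$, and that it is legitimate to call $\sigma$ \emph{the} Weyl symbol — this is exactly the terminology introduced right after Corollary~\ref{cor:kernel}.
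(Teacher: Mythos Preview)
Your proposal is correct and follows essentially the same route as the paper: apply the definition~\eqref{AlocopElena} and then invoke Lemma~\ref{lemmaComm} with $\cB=\cA_{1/2}$ to pass to the Weyl quantization. The paper's proof is slightly terser (it does not spell out why $\sigma\in\cS'(\rdd)$), but your added remarks on that point are accurate and do not change the argument.
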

 \begin{proof}
    Consider the $\cA$-pseudodifferential operator representing the  $\cA$-localization operator \eqref{AlocopElena}:\begin{equation*}
         A_{a,\cA}^{\varphi_1,\varphi_2}=\operatorname{Op}_{\cA}(a\ast W_{\cA}(\varphi_2,\varphi_1)).
     \end{equation*}
     By Lemma \ref{lemmaComm} \begin{equation*}
         \operatorname{Op}_{\cA}(a\ast W_{\cA}(\varphi_2,\varphi_1))= \operatorname{Op}_w(b),
     \end{equation*}
     where $b=\hat\cA_{1/2}\hat \cA^{-1}(a\ast W_{\cA}(\varphi_2,\varphi_1))$, which gives \eqref{WeylAloc}.
 \end{proof}
                
    
The following results are  generalizations of Theorem 6 in \cite{CorderoQB} to $\cA$-localization operators. For the sake of clarity, we distinguish the two cases $1\leq p\leq 2$ and $2<p\leq \i$.
\begin{theorem}\label{schattencont1}
    Let $1\leq p\leq 2$, $a\in M^{p,\i}(\rdd)$, $\varphi_1,\varphi_2\in M^{1}(\rd)$ and $\cA\in Sp(2d,\bR)$. Then $A_{a,\cA}^{\varphi_1,\varphi_2}\in\cS_p$ and we have the estimate:\begin{equation}\label{schattenbound1}
        \norm{A_{a,\cA}^{\varphi_1,\varphi_2}}_{\cS_p}\lesssim \norm{a}_{M^{p,\i}}\norm{\varphi_1}_{M^1}\norm{\varphi_2}_{M^1}.
    \end{equation}

\end{theorem}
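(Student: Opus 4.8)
The plan is to reduce the statement to the Weyl--Schatten criterion of Theorem~\ref{teoweylsymb}(i) via the Weyl symbol of $A_{a,\cA}^{\varphi_1,\varphi_2}$. By Proposition~\ref{propWeylAloc} (an application of Lemma~\ref{lemmaComm}) we have $A_{a,\cA}^{\varphi_1,\varphi_2}=\operatorname{Op}_w(\sigma)$ with
\[
\sigma=\hat\cA_{1/2}\,\hat\cA^{-1}\bigl(a*W_\cA(\varphi_2,\varphi_1)\bigr)\in\cS'(\rdd),
\]
so it suffices to show $\sigma\in M^p(\rdd)$ together with the bound $\norm{\sigma}_{M^p}\lesssim_\cA\norm{a}_{M^{p,\i}}\norm{\varphi_1}_{M^1}\norm{\varphi_2}_{M^1}$; since $1\le p\le 2$, Theorem~\ref{teoweylsymb}(i) then gives $A_{a,\cA}^{\varphi_1,\varphi_2}\in\cS_p$ and the estimate \eqref{schattenbound1}.

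I would estimate $\norm{\sigma}_{M^p}$ in three moves. \emph{(a)} The operator $\hat\cA_{1/2}\hat\cA^{-1}$ is metaplectic, and metaplectic operators are bounded on every modulation space $M^p(\rdd)$, $1\le p\le\i$ (with a bound independent of the chosen lift, the two lifts of a symplectic matrix differing only by a sign); hence $\norm{\sigma}_{M^p}\lesssim_\cA\norm{a*W_\cA(\varphi_2,\varphi_1)}_{M^p}$. \emph{(b)} For the convolution I would invoke the standard convolution relation $M^{p,\i}(\rdd)*M^{1}(\rdd)\hookrightarrow M^{p,1}(\rdd)$ --- which follows from the pointwise STFT identity $V_{g_1*g_2}(f_1*f_2)(x,\xi)=e^{2\pi i x\xi}\bigl(V_{g_1}f_1(\cdot,\xi)*V_{g_2}f_2(\cdot,\xi)\bigr)(x)$ together with Young's inequality in the $x$-variable and Hölder's inequality in the $\xi$-variable --- combined with the trivial inclusion $M^{p,1}\hookrightarrow M^{p,p}=M^p$ (valid because $p\ge1$); this yields $\norm{a*W_\cA(\varphi_2,\varphi_1)}_{M^p}\lesssim\norm{a}_{M^{p,\i}}\norm{W_\cA(\varphi_2,\varphi_1)}_{M^1}$. \emph{(c)} Finally, since $W_\cA(\varphi_2,\varphi_1)=\hat\cA(\varphi_2\otimes\overline{\varphi_1})$, the metaplectic invariance of $M^1(\rdd)$, the tensor factorization $\norm{\varphi_2\otimes\overline{\varphi_1}}_{M^1(\rdd)}\asymp\norm{\varphi_1}_{M^1}\norm{\varphi_2}_{M^1}$, and the invariance of $M^1$ under complex conjugation give $\norm{W_\cA(\varphi_2,\varphi_1)}_{M^1}\lesssim_\cA\norm{\varphi_1}_{M^1}\norm{\varphi_2}_{M^1}$. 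Chaining (a)--(c) produces exactly the asserted bound.

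There is no deep obstacle here: the only nontrivial inputs are the convolution relation $M^{p,\i}*M^1\hookrightarrow M^{p,1}$ and the boundedness of metaplectic operators on modulation spaces $M^p$ for all $p\in[1,\i]$ (used once on $M^p$ and once on $M^1$), both classical in time-frequency analysis; the inclusion $M^{p,1}\hookrightarrow M^p$ and the tensor property of $M^1$ are routine. The step deserving the most attention is step (b), where one must make sense of $a*W_\cA(\varphi_2,\varphi_1)$ for $a\in M^{p,\i}\subset\cS'(\rdd)$ and a non-Schwartz factor $W_\cA(\varphi_2,\varphi_1)\in M^1(\rdd)$, which is precisely what the modulation-space convolution theory provides, and where one must keep track of the fact that all implicit constants may depend on $\cA$ (and on the auxiliary windows defining the modulation norms) but are uniform in $a,\varphi_1,\varphi_2$ --- as \eqref{schattenbound1} requires. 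One further remark: the same argument works verbatim for any symbol class $M^{p,q}(\rdd)$, $1\le q\le\i$, since convolution with $W_\cA(\varphi_2,\varphi_1)\in M^{1,1}$ always lands in $M^{p,1}\hookrightarrow M^p$, so $M^{p,\i}$ is simply the largest admissible symbol space.
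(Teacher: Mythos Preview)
Your proposal is correct and follows essentially the same approach as the paper: compute the Weyl symbol via Proposition~\ref{propWeylAloc}, use metaplectic invariance of $M^p$ on both $\hat\cA_{1/2}\hat\cA^{-1}$ and $\hat\cA$, apply the convolution relation $M^{p,\infty}*M^1\hookrightarrow M^{p,1}$ together with the inclusion $M^{p,1}\hookrightarrow M^p$, and finish with Theorem~\ref{teoweylsymb}(i). Your write-up is in fact slightly more detailed than the paper's (e.g., the STFT justification of the convolution estimate and the remarks on the dependence of constants on $\cA$), but the argument is the same.
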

\begin{proof}
    By Proposition \ref{propWeylAloc}, the Weyl symbol of $A_{a,\cA}^{\varphi_1,\varphi_2}$ takes the form:\begin{equation}\label{eq:WeylE}
        \sigma=\hat\cA_{1/2}\hat \cA^{-1}(a\ast W_{\cA}(\varphi_2,\varphi_1)).
    \end{equation}
Since $\varphi_1,\varphi_2\in M^1(\rd)$, then $\varphi_2\otimes \overline{\varphi_1}\in M^1(\rdd)$ and by the continuity of metaplectic operators on modulation spaces $M^p$, cf., \cite{CauliNicolaTabacco2019}, we have that $W_{\cA}(\varphi_1,\varphi_2)\in M^1(\rdd)$ and \begin{equation*}
    \norm{W_{\cA}(\varphi_2,\varphi_1)}_{M^1(\rdd)}\lesssim \norm{\varphi_2\otimes \overline{\varphi_1}}_{M^1(\rdd)}\lesssim \norm{\varphi_1}_{M^1(\rd)}\norm{\varphi_2}_{M^1(\rd)}.
\end{equation*}
So, $a\ast W_{\cA}(\varphi_2,\varphi_1)$ is a convolution between an element of $M^{p,\i}(\rdd)$ and $M^1(\rdd)$, respectively. By the convolution properties for modulation spaces (see, e.g., Proposition 2.4. in \cite{corderogrochenig}) we have that $a\ast W_{\cA}(\varphi_2,\varphi_1)\in M^{p,1}(\rdd)$ with the norm estimate:\begin{equation*}
   \norm{ a\ast W_{\cA}(\varphi_2,\varphi_1)}_{M^{p,1}}\lesssim \norm{a}_{M^{p,\i}}\norm{W_{\cA}(\varphi_2,\varphi_1)}_{M^p}\lesssim \norm{a}_{M^{p,\i}}\norm{\varphi_1}_{M^1}\norm{\varphi_2}_{M^1}.
\end{equation*}
The continuous inclusion $M^{p,1} (\rdd)\hookrightarrow M^p(\rdd)$ (see Section 2 above) and the continuity of metaplectic operators on $M^p(\rdd)$ give the estimate:\begin{align*}
    \norm{\sigma}_{M^p}\lesssim \norm{a\ast W_{\cA}(\varphi_2,\varphi_1)}_{M^p}\lesssim \norm{a\ast W_{\cA}(\varphi_2,\varphi_1)}_{M^{p,1}}.
\end{align*}

Since the Weyl symbol $\sigma$ is in $M^p(\rdd)$, Theorem \ref{teoweylsymb} infers that the operator $A_{a,\cA}^{\varphi_1,\varphi_2}$ is in $\cS_p$ and it satisfies the norm estimate \eqref{schattenbound1}.
\end{proof}

We now treat the case $p> 2$.
\begin{theorem}\label{schattencont2}
    Let $2< p\leq \i$, $a\in M^{p,\i}(\rdd)$, $\varphi_1,\varphi_2\in M^{1}(\rd)$ and $\cA\in Sp(2d,\bR)$ with block decomposition \eqref{blockA}. If $\cA$ satisfies the block conditions:\begin{equation}
       \begin{cases}\label{blockconditions}
            A_{31}+A_{32}= 0_{d\times d}\\
            A_{41}+A_{42}= 0_{d\times d}\\
            A_{34}-A_{33}= 0_{d\times d}\\
            A_{43}+A_{44}= 0_{d\times d},
        \end{cases}\end{equation}
    then $A_{a,\cA}^{\varphi_1,\varphi_2}\in\cS_p$ and we have the estimate:\begin{equation}\label{schattenbound12}
        \norm{A_{a,\cA}^{\varphi_1,\varphi_2}}_{\cS_p}\lesssim \norm{a}_{M^{p,\i}}\norm{\varphi_1}_{M^1}\norm{\varphi_2}_{M^1}.
    \end{equation}

\end{theorem}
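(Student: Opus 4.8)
The plan is to mirror the proof of Theorem~\ref{schattencont1}, the only difference being that for $p>2$ the relevant Weyl-symbol criterion from Theorem~\ref{teoweylsymb}(ii) requires membership in $M^{p,p'}(\rdd)$ rather than $M^{p}(\rdd)$, and the extra block conditions \eqref{blockconditions} are exactly what is needed to guarantee this. First I would recall, via Proposition~\ref{propWeylAloc}, that the Weyl symbol of $A_{a,\cA}^{\varphi_1,\varphi_2}$ is $\sigma=\hat\cA_{1/2}\hat\cA^{-1}(a\ast W_{\cA}(\varphi_2,\varphi_1))$. As in the case $1\le p\le 2$, since $\varphi_1,\varphi_2\in M^1(\rd)$ we get $\varphi_2\otimes\overline{\varphi_1}\in M^1(\rdd)$ and hence $W_{\cA}(\varphi_2,\varphi_1)\in M^1(\rdd)$ by the continuity of metaplectic operators on modulation spaces, with the norm controlled by $\norm{\varphi_1}_{M^1}\norm{\varphi_2}_{M^1}$. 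Then $a\ast W_{\cA}(\varphi_2,\varphi_1)$ is a convolution of an $M^{p,\infty}(\rdd)$ element with an $M^1(\rdd)$ element, so by the convolution relations for modulation spaces it lies in $M^{p,1}(\rdd)$ with $\norm{a\ast W_{\cA}(\varphi_2,\varphi_1)}_{M^{p,1}}\lesssim \norm{a}_{M^{p,\infty}}\norm{\varphi_1}_{M^1}\norm{\varphi_2}_{M^1}$.

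The key new step is to show that applying $\hat\cA_{1/2}\hat\cA^{-1}$ to this $M^{p,1}(\rdd)$-function produces an element of $M^{p,p'}(\rdd)$ (with control of the norm), \emph{under the hypothesis \eqref{blockconditions}}. The natural route is to track how the composite metaplectic operator $\hat\cA_{1/2}\hat\cA^{-1}$ acts on the mixed-norm modulation spaces. Writing $\cB=\cA_{1/2}\cA^{-1}$ and using $\hat\cA^{-1}=\hat{\cA^{-1}}$ up to a unimodular constant (Lemma~\ref{lemmaComm} and Corollary~\ref{cor:kernel} give the algebra of these operators), one computes the block structure of $\cB$ from \eqref{inverseA} and the blocks $A_{ij}$ of $\cA$. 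The point is that a metaplectic operator maps $M^{p,1}$ into $M^{p,p'}$ provided its symplectic matrix does not mix the ``time'' and ``frequency'' halves of $\rdd$ in a way that destroys the $L^1$-integrability in the frequency variable; more precisely one wants $\hat\cB$ to be (a chirp times) a partial Fourier transform / dilation that respects the splitting $\rdd=\rd_x\times\rd_\xi$ used to define the mixed norm on the symbol side. The four conditions in \eqref{blockconditions} are precisely the vanishing of the off-diagonal couplings $A_{31}+A_{32}$, $A_{41}+A_{42}$, $A_{34}-A_{33}$, $A_{43}+A_{44}$ that would otherwise obstruct this; imposing them forces the lower half of $\cB$ (equivalently, the rows of $\cA$ governing the frequency output of $W_\cA$) to act as a pure change of variables without Fourier mixing, so that $\hat\cB\colon M^{p,1}(\rdd)\to M^{p,p'}(\rdd)$ boundedly. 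I would verify this by an explicit factorization of $\hat\cB$ into the generators $\mathfrak T_E$, $\hat V_C$ (chirp multiplication, which preserves all $M^{p,q}$), and $\cF_2$-type partial transforms, checking that under \eqref{blockconditions} no full Fourier transform in the ``wrong'' variable appears; each generator's action on mixed-norm modulation spaces is classical (dilations and chirps are bounded on every $M^{p,q}$, and $\cF_2$ swaps the roles consistently with $p,p'$).

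With $\sigma\in M^{p,p'}(\rdd)$ and $\norm{\sigma}_{M^{p,p'}}\lesssim \norm{a\ast W_{\cA}(\varphi_2,\varphi_1)}_{M^{p,1}}\lesssim \norm{a}_{M^{p,\infty}}\norm{\varphi_1}_{M^1}\norm{\varphi_2}_{M^1}$ in hand, I invoke Theorem~\ref{teoweylsymb}(ii): since $2< p\le\infty$ and $\sigma\in M^{p,p'}(\rdd)$, the Weyl operator $\operatorname{Op}_w(\sigma)=A_{a,\cA}^{\varphi_1,\varphi_2}$ belongs to $\cS_p$ with $\norm{A_{a,\cA}^{\varphi_1,\varphi_2}}_{\cS_p}\lesssim\norm{\sigma}_{M^{p,p'}}$, which chains to the desired estimate \eqref{schattenbound12}.

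The main obstacle is the second paragraph: extracting from the four linear block conditions \eqref{blockconditions} the exact statement that $\hat\cA_{1/2}\hat\cA^{-1}$ maps $M^{p,1}(\rdd)$ boundedly into $M^{p,p'}(\rdd)$. This requires a clean lemma on the action of metaplectic operators on mixed-norm modulation spaces — knowing which symplectic matrices preserve, swap, or degrade the mixed-norm indices — and then a careful bookkeeping of the block decomposition of $\cB=\cA_{1/2}\cA^{-1}$ using \eqref{inverseA} to see that \eqref{blockconditions} is precisely the condition making $\cB$ act ``triangularly'' with respect to the $x$–$\xi$ splitting on the symbol space. Once that structural lemma is isolated, the rest is the same convolution-and-inclusion bookkeeping as in Theorem~\ref{schattencont1}.
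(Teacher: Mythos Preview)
Your overall strategy is correct and matches the paper's: compute the Weyl symbol $\sigma=\hat\cA_{1/2}\hat\cA^{-1}(a\ast W_\cA(\varphi_2,\varphi_1))$, place $a\ast W_\cA(\varphi_2,\varphi_1)$ in $M^{p,1}(\rdd)$ exactly as in Theorem~\ref{schattencont1}, then argue that \eqref{blockconditions} is precisely what makes $\hat\cB=\hat\cA_{1/2}\hat\cA^{-1}$ behave well enough to land $\sigma$ in $M^{p,p'}(\rdd)$ and apply Theorem~\ref{teoweylsymb}(ii).

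The one difference is in how the ``key new step'' is executed. You propose to establish $\hat\cB:M^{p,1}\to M^{p,p'}$ by an explicit factorization of $\hat\cB$ into chirps, dilations, and partial Fourier transforms, checking that under \eqref{blockconditions} no ``bad'' Fourier transform appears. The paper takes a shorter route: it first uses the trivial embedding $M^{p,1}(\rdd)\hookrightarrow M^{p,p'}(\rdd)$ (valid since $p'\le p$), so that $a\ast W_\cA(\varphi_2,\varphi_1)\in M^{p,p'}$, and then invokes the characterization of F\"uhr and Shafkulovska \cite[Theorem~3.2]{Fuhr}, which says that a metaplectic operator is bounded on $M^{p,p'}(\rdd)$ (for $p\neq p'$) if and only if its symplectic projection is upper block triangular. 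A direct computation of the lower-left $2d\times 2d$ block of $\cB=\cA_{1/2}\cA^{-1}$ from \eqref{inverseA} and \eqref{Atau} then shows that this block vanishes exactly when \eqref{blockconditions} holds. Your factorization approach would ultimately reprove this special case of \cite{Fuhr} by hand; it works, but the paper's citation avoids the bookkeeping and gives the block conditions as an \emph{if and only if} for free.
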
 

\begin{proof} Our goal is to show that the Weyl symbol $\sigma$ in \eqref{eq:WeylE} is in $M^{p,p'}(\rdd)$. Then,   Theorem \ref{teoweylsymb} allows to conclude. Given $2\leq p\leq \i$, then $1\leq p'\leq p$, and we have the continuous embedding $M^{p,1}(\rdd)\hookrightarrow M^{p,p'}(\rdd)$. Hence, the same argument as in the proof of Theorem \ref{schattencont1} gives \begin{equation*}
    \norm{ a\ast W_{\cA}(\varphi_2,\varphi_1)}_{M^{p,p'}}\lesssim \norm{a}_{M^{p,\i}}\norm{\varphi_1}_{M^1}\norm{\varphi_2}_{M^1}.
\end{equation*}
If $p \not=p'$, by the characterization presented by  Führ and  Shafkulovska in \cite[Theorem 3.2]{Fuhr}, the metaplectic operator $\hat \cA_{1/2}\hat\cA^{-1}$ is everywhere defined and continuous from $M^{p,p'}(\rdd)$ to itself, if and only if the projection $\cA_{1/2}\cA^{-1}$ is upper block triangular. To conclude the proof, we verify that the conditions in \eqref{blockconditions} are equivalent to state that $\cA_{1/2}\cA^{-1}$ is upper block triangular. If $\cA$ has block decomposition \eqref{blockA}, then, by \eqref{inverseA}, \begin{equation*}
    \cA^{-1}=\begin{pmatrix}
        A_{33}&A_{43}&-A_{13}&-A_{23}\\
        A_{34}&A_{44}&-A_{14}&-A_{24}\\
        -A_{31}&-A_{41}&A_{11}&A_{21}\\
        -A_{32}&-A_{42}&A_{12}&A_{22}
    \end{pmatrix}.
\end{equation*}
For $\tau=1/2$ in \eqref{Atau} we obtain \begin{equation*}
    \cA_{1/2}=\begin{pmatrix}\frac{1}{2}I_{d\times d} & \frac{1}{2}I_{d\times d} & 0_{d\times d} & 0_{d\times d}\\
		0_{d\times d} & 0_{d\times d} & \frac{1}{2}I_{d\times d} & -\frac{1}{2}I_{d\times d}\\
		0_{d\times d} & 0_{d\times d} & I_{d\times d} & I_{d\times d}\\
		-I_{d\times d} & I_{d\times d} & 0_{d\times d} & 0_{d\times d}\end{pmatrix}.
\end{equation*}
By computing the matrix multiplication $\cA_{1/2}\cA^{-1}$, it is easy to find that the $2d\times2d$ left-lower block is given by the matrix:\begin{equation*}
   \begin{pmatrix}
       -A_{31}-A_{32} & -A_{41}-A_{42}\\
       A_{34}-A_{33}& A_{43}+A_{44}
   \end{pmatrix},
\end{equation*}
which is $0_{2d\times2d}$ if and only if \eqref{blockconditions} holds.
\end{proof}

\begin{remark}
    From Proposition \ref{propcovariant} it is evident that the conditions \eqref{blockconditions} hold for every covariant metaplectic Wigner distribution, hence, we retrieve the results for classical localization operators in Theorems \ref{schattencont1} and \ref{schattencont2}.
\end{remark}

The following example shows that if  $\cA$ does not satisfies conditions \eqref{blockconditions}, then the operator $A_{a,\cA}^{\varphi_1,\varphi_2}$ may not be bounded on $L^2(\rd)$.

\begin{example}
    Let $\varphi_1=\varphi_2=\phi,$ for $\phi$ defined in \eqref{eq:phi}. Consider the Fourier operator $\hat J=\cF$ and the symbol  $a\equiv 1\in M^{\i}(\rdd)$. Observe that the symplectic matrix $J$ does not satisfies conditions \eqref{blockconditions}. In fact,  we have $$A_{31}+A_{32}=-I_{d\times d}\ne 0_{d\times d}.$$
    Then $ A_{a,\cA}^{\varphi_1,\varphi_2}$ is not bounded on $\lrd$.
\end{example}
\begin{proof}
	To extend $A_{a,\cA}^{\varphi_1,\varphi_2}:\cS(\rd)\to \cS'(\rd)$ to a bounded, linear operator on $L^2(\rd)$, it is necessary (and sufficient) that:\begin{equation*}
		\sup_{\norm{g}_2=1;\ g\in \cS(\rd)} |\la A_{a,\cA}^{\varphi_1,\varphi_2}f,g\ra|<\i, \quad \forall f\in \cS(\rd).
	\end{equation*}
Fix $f\in \cS(\rd)$, then, for every $g\in \cS(\rd)$ with $\norm{g}_2=1$ we have,
	\begin{align*}
		|\la A_{a,\cA}^{\varphi_1,\varphi_2}f,g\ra|&=|\la a, W_{\cA}(g,f)\ast W_{\cA}(\varphi_2,\varphi_1)^*\ra|\\
		&=|\la a, \cF(g\otimes \overline{f})\ast \cF\big(\cF^{-1}(W_{\cA}(\varphi_2,\varphi_1)^*)\big)\ra|\\
		&=|\la a, \cF\bigg((g\otimes \overline{f})\cdot \big(\cF^{-1}(W_{\cA}(\varphi_2,\varphi_1)^*)\big)\bigg)\ra|\\
		&=|\la \cF^{-1}a, (g\otimes \overline{f})\cdot \big(\cF^{-1}(W_{\cA}(\varphi_2,\varphi_1)^*)\big)\ra|.
	\end{align*}
Since, $\cF^{-1}a=\delta$, the Dirac delta distribution centered in $(0,0)\in\rdd$ and $$W_{\cA}(\varphi_2,\varphi_1)^*(\eta)=\overline{\cF(\varphi_2\otimes \overline{\varphi_1})(-\eta)}=\overline{\cF^{-1}(\varphi_2\otimes \overline{\varphi_1})(\eta)},\ \forall \eta \in \rdd,$$
we can write\begin{align*}
		|\la A_{a,\cA}^{\varphi_1,\varphi_2}f,g\ra|=|\overline{g(0)}f(0) \overline{\bigg(\cF^{-1}\big(\overline{\cF^{-1}(\varphi_2\otimes \overline{\varphi_1})}\big)\bigg)(0,0)}|.
	\end{align*}
	Since $\varphi_1,\varphi_2=\phi$, it follows that  $\overline{\bigg(\cF^{-1}\big(\overline{\cF^{-1}(\varphi_2\otimes \overline{\varphi_1})}\big)\bigg)(0,0)}=1 $. In conclusion, \begin{equation*}
		\sup_{\norm{g}_2=1;\ g\in \cS(\rd)} |\la A_{a,\cA}^{\varphi_1,\varphi_2}f,g\ra|=|f(0)|\sup_{\norm{g}_2=1;\ g\in \cS(\rd)} |g(0)|,
	\end{equation*}
	which is not finite for if  $f(0)\ne 0$. Take, for instance, $\frac1{\eps^{d/2}}g(\frac{t}{\eps})$, with $\|g\|_2=1$ and $g(0)\not=0$.
\end{proof}

Under the hypotheses of Theorems \ref{schattencont1},  the operator $A_{a,\cA}^{\varphi_1,\varphi_2}$ is a bounded linear operator on $L^2(\rd)$. We report here the calculation for its adjoint.

\begin{proposition}\label{adjointprop}
    Assume that $A_{a,\cA}^{\varphi_1,\varphi_2}$ is a continuous mapping on $L^2(\rd)$, then its adjoint operator is given by\begin{equation}\label{adjoint}
        (A_{a,\cA}^{\varphi_1,\varphi_2})^*=A_{\overline{a}, \cA \cD_S}^{\varphi_2,\varphi_1},
    \end{equation}
where \begin{equation}\label{eq:S}
	S=\begin{pmatrix}
        0_{d\times d}& I_{d\times d}\\
        I_{d\times d}& 0_{d\times d},
    \end{pmatrix},
\end{equation}
and the related symplectic matrix $\mathcal{D}_S$ is defined in \eqref{defDLVC}.
\end{proposition}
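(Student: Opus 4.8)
The plan is to start from the weak form of the $\cA$-localization operator and dualize. By Theorem \ref{main} (more precisely by its proof and the identity \eqref{AlocopElena}), for $f,g\in\cS(\rd)$ one has $\langle A_{a,\cA}^{\varphi_1,\varphi_2}f,g\rangle=\langle a, W_{\cA}(g,f)\ast W_{\cA}(\varphi_2,\varphi_1)^*\rangle$. Taking adjoints, $\langle f, (A_{a,\cA}^{\varphi_1,\varphi_2})^*g\rangle=\overline{\langle A_{a,\cA}^{\varphi_1,\varphi_2}g,f\rangle}=\overline{\langle a, W_{\cA}(f,g)\ast W_{\cA}(\varphi_2,\varphi_1)^*\rangle}=\langle \overline a, \overline{W_{\cA}(f,g)\ast W_{\cA}(\varphi_2,\varphi_1)^*}\rangle$. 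The first step is thus to understand how complex conjugation interacts with $W_\cA$ and with the starred convolution: I expect $\overline{W_\cA(f,g)}$ to relate to $W_{\cA\cD_S}(g,f)$ up to the reflection encoded by $S$, using $\hat\cD_S$ (the metaplectic operator attached to $S$ from \eqref{defDLVC}, which by Example \ref{es22}(ii) is essentially the coordinate swap $f\otimes\overline g\mapsto \overline g\otimes f$ composed with the conjugation), together with the fact that conjugation turns $\hat\cA$ into $\hat{\overline{\cA}}$ acting on conjugated tensors.

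Concretely, I would verify the algebraic identity $W_{\cA\cD_S}(\varphi_2,\varphi_1)=\overline{W_\cA(\varphi_1,\varphi_2)}$ (or the analogous one with a reflection $\eta\mapsto-\eta$), by writing $W_{\cA\cD_S}(\varphi_2,\varphi_1)=\hat\cA\hat\cD_S(\varphi_2\otimes\overline{\varphi_1})$, computing $\hat\cD_S(\varphi_2\otimes\overline{\varphi_1})$ from the definition $\mathfrak T_E f=|\det E|^{1/2}f(E\cdot)$ with $E=S$, which swaps the two $\rd$-variables, and then matching against $\overline{\varphi_1\otimes\overline{\varphi_2}}=\overline{\varphi_1}\otimes\varphi_2$. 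One then transports this through $\hat\cA$, using unitarity and the way conjugation commutes past a metaplectic operator. Doing the same for the symbol side, the starred convolution $W_\cA(g,f)^*$ and the conjugation combine so that the pair $(\varphi_1,\varphi_2)$ gets swapped to $(\varphi_2,\varphi_1)$ and $a$ to $\overline a$, while the matrix $\cA$ gets replaced by $\cA\cD_S$. Comparing the resulting weak form with the definition of $A_{\overline a,\cA\cD_S}^{\varphi_2,\varphi_1}$ (again via \eqref{AlocopElena} and the weak identity from Theorem \ref{main}) yields \eqref{adjoint}.

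The main obstacle is bookkeeping the reflection/conjugation symmetries correctly: there is genuine risk of an extra sign or an extra coordinate flip, so I would anchor the computation in one sanity check — the Weyl case $\cA=\cA_{1/2}$, where the classical identity $(A_a^{\varphi_1,\varphi_2})^*=A_{\overline a}^{\varphi_2,\varphi_1}$ is known — and confirm that $\cA_{1/2}\cD_S$ produces, via Lemma \ref{lemmaComm} and Corollary \ref{cor:kernel}, exactly the Weyl-localization operator with swapped windows and conjugated symbol; this forces $S$ to be the swap matrix \eqref{eq:S} and pins down the formula. A secondary technical point is justifying the manipulations for $a\in\cS'(\rdd)$ rather than just $L^2$: since $W_\cA(\varphi_2,\varphi_1)\in\cS(\rdd)$ (Proposition \ref{prop25}), the convolution $a\ast W_\cA(\varphi_2,\varphi_1)$ is a well-defined tempered distribution and all dualities above make sense on $\cS(\rd)$, so the identity \eqref{adjoint} holds as operators $\cS(\rd)\to\cS'(\rd)$, and under the hypotheses of Theorem \ref{schattencont1} it is the genuine Hilbert-space adjoint on $L^2(\rd)$.
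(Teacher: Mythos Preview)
Your overall route—start from the weak identity $\langle A_{a,\cA}^{\varphi_1,\varphi_2}f,g\rangle=\langle a, W_\cA(g,f)\ast W_\cA(\varphi_2,\varphi_1)^*\rangle$, insert the coordinate swap $\hat\cD_S$, and then conjugate—is exactly the paper's. Two points need correction, though.

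First, the adjoint bookkeeping is off: you write $\langle f, (A_{a,\cA}^{\varphi_1,\varphi_2})^*g\rangle=\overline{\langle A_{a,\cA}^{\varphi_1,\varphi_2}g,f\rangle}$, but the right-hand side equals $\langle f, A_{a,\cA}^{\varphi_1,\varphi_2}g\rangle$, which is not $\langle f, (A_{a,\cA}^{\varphi_1,\varphi_2})^*g\rangle$ unless the operator is self-adjoint. The paper never ``takes adjoints'' in this way: it starts from $\langle A_{a,\cA}^{\varphi_1,\varphi_2}f,g\rangle$ and rewrites it directly as $\langle f, A_{\overline a,\cA\cD_S}^{\varphi_2,\varphi_1}g\rangle$.

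Second, the intermediate identity you propose, $W_{\cA\cD_S}(\varphi_2,\varphi_1)=\overline{W_\cA(\varphi_1,\varphi_2)}$, is false in general: unwinding the definitions it says $\hat\cA(\overline h)=\overline{\hat\cA h}$, and metaplectic operators do not commute with complex conjugation (already $\overline{\cF h}=\cF^{-1}\overline h$). Your hedge about a possible extra reflection is well-founded, but the paper sidesteps this entirely. It never compares $W_{\cA\cD_S}$ with $\overline{W_\cA}$ pointwise; it only uses the trivial tensor identity $\hat\cD_S(\varphi_2\otimes\overline{\varphi_1})=\overline{\varphi_1}\otimes\varphi_2=\overline{\varphi_1\otimes\overline{\varphi_2}}$ to write $W_\cA(g,f)=\hat\cA\hat\cD_S(\overline{f\otimes\overline g})$ (and similarly for the window factor), then pulls a single complex conjugation through the \emph{entire} convolution $F\ast G^*$ at once, turning the pairing with $a$ into a pairing of $\overline a$ with $W_{\cA\cD_S}(f,g)\ast W_{\cA\cD_S}(\varphi_1,\varphi_2)^*$. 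Following that tensor-level manipulation, rather than trying to establish a pointwise relation between $W_{\cA\cD_S}$ and $\overline{W_\cA}$, is what makes the argument go through.
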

\begin{proof}
    Let $f,g\in L^2(\rd)$, then \begin{align*}
        \la A_{a,\cA}^{\varphi_1,\varphi_2}f,g\ra=&\la a, W_{\cA}(g,f)\ast W_{\cA}(\varphi_2,\varphi_1)^*\ra\\
        =&\la a, \hat\cA(g\otimes \overline{f})\ast(\hat \cA(\varphi_2\otimes \overline{\varphi_1}))^*\ra.
    \end{align*}
    If $S$ is given by \eqref{eq:S}, the related symplectic matrix is $\mathcal{D}_S$, defined in \eqref{defDLVC}, and the  metaplectic operator  $\widehat{\mathcal{D}}_S=\mathfrak{T}_S$ in \eqref{es22} $(ii)$ switches the two variables:
    \begin{equation*}
      \mathfrak{T}_S F(x,y)=F(y,x),\quad \forall F\in L^2(\rdd),\ x,y\in \rd.
    \end{equation*}
    Therefore, \begin{align*}
        \la a, \hat\cA(g\otimes \overline{f})\ast(\hat \cA(\varphi_2\otimes \overline{\varphi_1}))^*\ra&= \la a, \hat\cA\widehat\cD_S(\overline{f}\otimes g)\ast(\hat \cA\widehat \cD_S(\overline{\varphi_1}\otimes \varphi_2))^*\ra\\
        &=\la a, \overline{\hat\cA\widehat \cD_S(f\otimes \overline{g})\ast(\hat \cA\widehat \cD_S(\varphi_1\otimes \overline{\varphi_2}))^*}\ra\\
        &=\overline{\la \overline{a}, \hat\cA\widehat \cD_S(f\otimes \overline{g})\ast(\hat \cA\widehat \cD_S(\varphi_1\otimes \overline{\varphi_2}))^*\ra}\\
        &=\overline{\la \overline{a}, W_{\cA D_S}(f,g)\ast W_{\cA \cD_S}(\varphi_1,\varphi_2)^*\ra}\\
        &=\overline{\la A_{\overline{a}, \cA \cD_S}^{\varphi_2,\varphi_1}g,f\ra}\\
        &=\la f, A_{\overline{a}, \cA \cD_S}^{\varphi_2,\varphi_1}g\ra,
    \end{align*}
    which concludes the proof.
\end{proof}

\begin{theorem}\label{continuityonMp}
    Let $1< p\leq \i$ $\cA\in Sp(2d,\bR)$, $\varphi_1,\varphi_2\in M^1(\rd)$ and $a\in M^{r,\i}(\rdd)$, where $r=\min\{p,p'\}$. Then $A_{a,\cA}^{\varphi_1,\varphi_2}$ is bounded from $M^{p}(\rd)$ to itself.
\end{theorem}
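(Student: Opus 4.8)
The plan is to reduce the boundedness of $A_{a,\cA}^{\varphi_1,\varphi_2}$ on $M^p(\rd)$ to a statement about Weyl operators, exactly as in Theorems~\ref{schattencont1} and~\ref{schattencont2}, and then invoke a known continuity result for Weyl operators with symbols in $\min\{p,p'\}$-type modulation spaces. First I would compute the Weyl symbol via Proposition~\ref{propWeylAloc}, so that $A_{a,\cA}^{\varphi_1,\varphi_2}=\operatorname{Op}_w(\sigma)$ with $\sigma=\hat\cA_{1/2}\hat\cA^{-1}(a\ast W_\cA(\varphi_2,\varphi_1))$. As in the proof of Theorem~\ref{schattencont1}, since $\varphi_1,\varphi_2\in M^1(\rd)$ we get $\varphi_2\otimes\overline{\varphi_1}\in M^1(\rdd)$, hence $W_\cA(\varphi_2,\varphi_1)\in M^1(\rdd)$ by continuity of metaplectic operators on $M^1$, and then $a\ast W_\cA(\varphi_2,\varphi_1)\in M^{r,1}(\rdd)$ with the norm control $\norm{a\ast W_\cA(\varphi_2,\varphi_1)}_{M^{r,1}}\lesssim\norm{a}_{M^{r,\i}}\norm{\varphi_1}_{M^1}\norm{\varphi_2}_{M^1}$, using the convolution relation $M^{r,\i}\ast M^1\hookrightarrow M^{r,1}$.

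The second step is to control the effect of the remaining metaplectic operator $\hat\cA_{1/2}\hat\cA^{-1}$. Here, unlike in Theorems~\ref{schattencont1}--\ref{schattencont2}, I do not want to impose block conditions on $\cA$; instead I would observe that metaplectic operators are \emph{always} bounded on $M^1(\rdd)$ (no triangularity needed), so from $M^{r,1}\hookrightarrow M^1$ we obtain $\sigma\in M^1(\rdd)$ with $\norm{\sigma}_{M^1}\lesssim\norm{a}_{M^{r,\i}}\norm{\varphi_1}_{M^1}\norm{\varphi_2}_{M^1}$. Finally, the key input is the classical boundedness result for Weyl operators: a symbol in $M^1(\rdd)\subseteq M^{\i,1}(\rdd)$ (the Sjöstrand class) yields $\operatorname{Op}_w(\sigma)$ bounded on $M^{p,q}(\rd)$ for all $1\le p,q\le\i$, in particular on $M^p(\rd)$ for every $1<p\le\i$, with operator norm $\lesssim\norm{\sigma}_{M^1}$. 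Composing the estimates gives boundedness of $A_{a,\cA}^{\varphi_1,\varphi_2}$ on $M^p(\rd)$.

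I expect the main subtlety to be purely bookkeeping about which convolution and inclusion relations for modulation spaces are being used, and in particular making precise why $r=\min\{p,p'\}$ is the natural exponent: the point is that $M^{r,\i}\ast M^1\hookrightarrow M^{r,1}\hookrightarrow M^1$ already for any $r$, so the role of $r$ only matters if one wants a sharper conclusion tracking $p$ (e.g. landing the Weyl symbol in $M^{p,p'}$ rather than $M^1$); since $M^1\hookrightarrow M^{p,p'}$ the weaker hypothesis $a\in M^{r,\i}$ with $r=\min\{p,p'\}$ is what one reads off from the Weyl-operator continuity theorem on $M^p$, and I would cite that theorem (e.g.\ from~\cite{corderogrochenig} or~\cite{book}) rather than reprove it. No genuine obstacle is anticipated beyond assembling these standard ingredients in the right order.
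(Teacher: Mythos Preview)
Your argument has a genuine gap: the inclusion $M^{r,1}(\rdd)\hookrightarrow M^1(\rdd)$ that you invoke is \emph{false} for $r>1$. Modulation space inclusions run as $M^{p_1,q_1}\hookrightarrow M^{p_2,q_2}$ only when $p_1\le p_2$ and $q_1\le q_2$, so in fact $M^1\hookrightarrow M^{r,1}$, not the other way around. Since $r=\min\{p,p'\}\in(1,2]$ for $1<p<\infty$, you cannot conclude that $a\ast W_\cA(\varphi_2,\varphi_1)\in M^1(\rdd)$, and therefore you cannot place the Weyl symbol $\sigma$ in $M^1$ or in the Sj\"ostrand class $M^{\infty,1}$. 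This is not a bookkeeping issue: the whole point of the block conditions in Theorem~\ref{schattencont2} is precisely that general metaplectic operators do \emph{not} preserve mixed-norm spaces $M^{p,q}$ with $p\neq q$ (this is the F\"uhr--Shafkulovska obstruction), so routing the argument through the Weyl symbol forces you either to impose such conditions or to lose control of $\sigma$. Your final paragraph repeats the same mistaken inclusion and concludes that the hypothesis on $r$ is inessential, which cannot be right.

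The paper avoids this obstacle by abandoning the Weyl-symbol route altogether and working directly with the bilinear form
\[
\langle A_{a,\cA}^{\varphi_1,\varphi_2}f,g\rangle=\langle a,\,W_\cA(g,f)\ast W_\cA(\varphi_2,\varphi_1)^*\rangle.
\]
One estimates $\|W_\cA(g,f)\|_{M^{\max\{p,p'\}}}\lesssim\|g\otimes\bar f\|_{M^{\max\{p,p'\}}}$ using boundedness of $\hat\cA$ on the \emph{diagonal} space $M^{\max\{p,p'\}}$ (where no triangularity is needed), then applies the convolution relation $M^{\max\{p,p'\}}\ast M^1\hookrightarrow M^{\max\{p,p'\},1}$ and H\"older's inequality against $a\in M^{\min\{p,p'\},\infty}$. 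The exponent $r=\min\{p,p'\}$ enters exactly here, as the dual of $\max\{p,p'\}$; it is essential, not a sharpening. If you want to rescue a Weyl-symbol approach you would need to embed $M^{r,1}\hookrightarrow M^r$, use metaplectic boundedness on $M^r$, and then supply an $M^p$-boundedness theorem for Weyl operators with symbol in $M^r$, $r=\min\{p,p'\}$; that is a different (and nontrivial) input from the Sj\"ostrand-class result you cite.
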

\begin{proof}

Given $h\in \cS'(\rd)$, we recall that:\begin{equation*}
    \norm{h}_{M^p}=\sup_{\norm{g}_{M^{p'}}=1}|\la h,g\ra|, \quad \forall 1<p\leq \i.
\end{equation*}
So, for every $f\in \cS(\rd)$,\begin{align*}
    \norm{A_{a,\cA}^{\varphi_1,\varphi_2}f}_{M^p}=&\sup_{\norm{g}_{M^{p'}}=1}|\la A_{a,\cA}^{\varphi_1,\varphi_2}f ,g\ra|\\
    =&\sup_{\norm{g}_{M^{p'}}=1}|\la a, W_{\cA}(g,f)\ast W_{\cA}(\varphi_2,\varphi_1)^*\ra|.
\end{align*}
Since $f\in M^{p}(\rd),g\in M^{p'}(\rd)$ and $\varphi_1,\varphi_2\in M^1(\rd)$, it follows that $W_{\cA}(g,f)\in M^{\max\{p,p'\}}(\rdd), W_{\cA}(\varphi_2,\varphi_1)^*\in M^1(\rdd)$. We use the convolution properties  for modulation spaces (Proposition 2.4 \cite{corderogrochenig}) to infer$$W_{\cA}(g,f)\ast W_{\cA}(\varphi_2,\varphi_1)^*\in M^{\max \{p,p'\},1}(\rdd)$$ and \begin{equation*}
   \norm{W_{\cA}(g,f)\ast W_{\cA}(\varphi_2,\varphi_1)^*}_{M^{\max\{p,p'\},1}}\lesssim \norm{W_{\cA}(g,f)}_{M^{\max\{p,p'\}}}\norm{W_{\cA}(\varphi_2,\varphi_1)^*}_{M^1}.
\end{equation*} Moreover, by H\"older inequality, \begin{equation*}
    |\la a, W_{\cA}(g,f)\ast W_{\cA}(\varphi_2,\varphi_1)^*\ra|\lesssim  \norm{a}_{M^{\min\{p,p'\},\i}}\norm{W_{\cA}(g,f)\ast W_{\cA}(\varphi_2,\varphi_1)^*}_{M^{\max\{p,p'\},1}}.
\end{equation*}
Therefore, since $a, \varphi_1,\varphi_2$ are fixed, \begin{align*}
    \norm{A_{a,\cA}^{\varphi_1,\varphi_2}f}_{M^p}\lesssim \sup_{\norm{g}_{M^{p'}}=1} \norm{W_{\cA}(g,f)}_{M^{\max\{p',p\}}}.
\end{align*}
The continuity of $\hat \cA$ on $M^{\max\{p',p\}}(\rdd)$ and the embedding $M^{\max\{p',p\}}(\rd)\hookrightarrow M^k(\rd)$, with $k=p$ or $p'$, implies: \begin{align*}
    \sup_{\norm{g}_{M^{p'}}=1} \norm{W_{\cA}(g,f)}_{M^{\max\{p',p\}}} &\lesssim \sup_{\norm{g}_{M^{p'}}=1}\norm{g\otimes \overline{f}}_{M^{\max\{p',p\}}}\\
    &\lesssim \sup_{\norm{g}_{M^{p'}}=1}\norm{g}_{M^{p'}}\norm{f}_{M^p}\\
    &\lesssim \norm{f}_{M^p},
\end{align*}

showing the boundedness of $A_{a,\cA}^{\varphi_1,\varphi_2}$ on $M^p(\rd)$.
\end{proof}

\begin{remark}
  (i)  By applying the same strategy of the proof above, one can easily show that, if $1\leq p<\i$ and $a\in M^{p',\i}(\rdd)$, then $A_{a,\cA}^{\varphi_1,\varphi_2}$ is bounded from $M^{p}(\rd)$ to $M^{p'}(\rd)$. \par
  (ii) Similar arguments can be used to show the continuity properties of $\cA$-localization operators on weighted modulation spaces, we leave the details to the interested reader.
\end{remark}

\section*{Acknowledgements}
The   authors have been supported by the Gruppo Nazionale per l’Analisi Matematica, la Probabilità e le loro Applicazioni (GNAMPA) of the Istituto Nazionale di Alta Matematica (INdAM).

\end{document}